\newtheorem{theorem}{Theorem}[section]
\newtheorem{lemma}[theorem]{Lemma}
\newtheorem{prop}[theorem]{Proposition}
\newtheorem{assumption}[theorem]{Assumption}
\theoremstyle{definition}
\newtheorem{definition}[theorem]{Definition}
\newtheorem{example}[theorem]{Example}
\theoremstyle{remark}
\newtheorem{remark}[theorem]{Remark}
\numberwithin{equation}{section}
\DeclareMathAlphabet{\mathsl}{OT1}{cmss}{m}{sl}
\SetMathAlphabet{\mathsl}{bold}{OT1}{cmss}{bx}{sl}
\newcommand{\al}{\ensuremath{\alpha}}
\newcommand{\be}{\ensuremath{\beta}}
\newcommand{\ga}{\ensuremath{\gamma}}
\newcommand{\de}{\ensuremath{\delta}}
\newcommand{\ze}{\ensuremath{\zeta}}
\renewcommand{\th}{\ensuremath{\theta}}
\newcommand{\ka}{\ensuremath{\kappa}}
\newcommand{\si}{\ensuremath{\sigma}}
\newcommand{\om}{\ensuremath{\omega}}
\newcommand{\ve}{\ensuremath{\varepsilon}}
\newcommand{\vp}{\ensuremath{\varphi}}
\newcommand{\Si}{\ensuremath{\Sigma}}
\newcommand{\Om}{\ensuremath{\Omega}}
\newcommand{\cB}{\ensuremath{\mathcal B}}
\newcommand{\cC}{\ensuremath{\mathcal C}}
\newcommand{\cF}{\ensuremath{\mathcal F}}
\newcommand{\cL}{\ensuremath{\mathcal L}}
\newcommand{\cO}{\ensuremath{\mathcal O}}
\newcommand{\cS}{\ensuremath{\mathcal S}}
\newcommand{\bbN}{\ensuremath{\mathbb N}} 
\newcommand{\bbP}{\ensuremath{\mathbb P}} 
\newcommand{\bbR}{\ensuremath{\mathbb R}}
\newcommand{\bbZ}{\ensuremath{\mathbb Z}} 
\newcommand{\md}{\ensuremath{\mathrm{d}}}
\newcommand{\mD}{\ensuremath{\mathrm{D}}}
\newcommand{\norm}[3]{%
   \ensuremath{%
     \mathchoice{\big\lVert #1 \big\rVert}
     {\lVert #1 \rVert}
     {\lVert #1 \rVert}
     {\lVert #1 \rVert}_{\raisebox{-.0ex}{$\scriptstyle \ell^{\raisebox{.2ex}{$\scriptscriptstyle #2$}} (#3)$}}
   }
}
\newcommand{\Norm}[2]{%
  \ensuremath{%
    \mathchoice{\big\lVert #1 \big\rVert}
     {\lVert #1 \rVert}
     {\lVert #1 \rVert}
     {\lVert #1 \rVert}_{\raisebox{-.0ex}{$\scriptstyle #2$}}
  }
}
\DeclareMathOperator{\mean}{\mathbb{E}}
\DeclareMathOperator{\Mean}{\mathrm{E}}
\DeclareMathOperator{\prob}{\mathbb{P}} 
\DeclareMathOperator{\Prob}{\mathrm{P}} 
\DeclareMathOperator{\supp}{\mathrm{supp}}
\newcommand{\ldef}{\ensuremath{\mathrel{\mathop:}=}}
\newcommand{\rdef}{\ensuremath{=\mathrel{\mathop:}}}
\newcommand{\overbar}[1]{\mkern 2mu\overline{\mkern-3mu#1\mkern-1mu}\mkern 2mu}
\newcommand{\indicator}{\mathbbm{1}}
\begin{document}

\title[QFCLT for the RCM with ergodic conductances on random graphs]{Quenched invariance principles for the random conductance model on a random graph with degenerate ergodic weights}


\author{Jean-Dominique Deuschel}
\address{Technische Universit\"at Berlin}
\curraddr{Strasse des 17. Juni 136, 10623 Berlin}
\email{deuschel@math.tu-berlin.de}
\thanks{}

\author{Tuan Anh Nguyen}
\address{Technische Universit\"at Berlin}
\curraddr{Strasse des 17. Juni 136, 10623 Berlin}
\email{tanguyen@math.tu-berlin.de}
\thanks{}

\author{Martin Slowik}
\address{Technische Universit\"at Berlin}
\curraddr{Strasse des 17. Juni 136, 10623 Berlin}
\email{slowik@math.tu-berlin.de}
\thanks{}

\subjclass[2000]{60K37, 60F17, 82C41, 82B43}

\keywords{Random conductance model, invariance principle, percolation, isoperimetric inequality}

\date{\today}

\dedicatory{}

\begin{abstract}
  We consider a stationary and ergodic random field $\{\om(e) : e \in E_d\}$ that is parameterized by the edge set of the Euclidean lattice $\bbZ^d$, $d \geq 2$.  The random variable $\om(e)$, taking values in $[0, \infty)$ and satisfying certain moment bounds, is thought of as the conductance of the edge $e$.  Assuming that the set of edges with positive conductances give rise to a unique infinite cluster $\cC_{\infty}(\om)$, we prove a quenched invariance principle for the continuous-time random walk among random conductances under under relatively mild conditions on the structure of the infinite cluster.  An essential ingredient of our proof is a new anchored relative isoperimetric inequality. 
\end{abstract}

\maketitle

\tableofcontents

\section{Introduction}\label{sec:INTRO}
\subsection{The model}
Consider the $d$-dimensional Euclidean lattice, $(\bbZ^d, E_d)$, for $d \geq 2$, where the edge set, $E_d$, is given by the set of all non-oriented nearest neighbor bonds.  Let $(\Om, \cF) = ([0, \infty)^{E_d}, \cB([0, \infty))^{\otimes E_d})$ be a measurable space equipped with the Borel-$\si$-algebra.  For $\om \in \Om$, we refer to $\om(\{x,y\})$ as the \emph{conductance} of the corresponding edge $\{x,y\}$.  Henceforth, we consider a probability measure $\prob$ on $(\Om, \cF)$, and we write $\mean$ to denote the expectation with respect to $\prob$.  Further, a \emph{translation} or \emph{shift} by $z \in \bbZ^d$ is a map $\tau_z\!: \Om \to \Om$,
\begin{align}\label{eq:def:translation}
  (\tau_z \om)(\{x,y\})
  \;\ldef\;
  \om(\{x+z,y+z\}),
  \qquad \{x,y\}\in E_d.
\end{align}
The set $\{\tau_x : x \in \bbZ^d\}$ together with the operation $\tau_x \circ \tau_y \ldef \tau_{x+y}$ defines the group of space shifts.

For any $\om \in \Om$, the induced set of \emph{open} edges is denoted by
\begin{align*}
  \cO
  \;\equiv\;
  \cO(\om)
  \;\ldef\;
  \big\{ e \in E_d \mid \om(e) > 0\}
  \;\subset\;
  E_d.
\end{align*}
We also write $x \sim y$ if $\{x, y\} \in \cO(\om)$.  Further, we denote by $\cC_{\infty}(\om)$ the subset of vertices of $\bbZ^d$ that are in infinite connected components.

Throughout the paper, we will impose assumptions both on the law $\prob$ and on geometric properties of the infinite cluster.
\begin{assumption}\label{assumption:law}
  Assume that $\prob$ satisfies the following conditions:
  \begin{enumerate}[(i)]
  \item The law $\prob$ is stationary and ergodic with respect to translations of $\bbZ^d$.
  \item $\mean[\om(e)] < \infty$ for all $e \in E_d$.
  \item For $\prob$-a.e.\ $\om$, the set $\cC_{\infty}(\om)$ is connected, i.e.\ there exists a unique infinite connected component -- also called infinite open cluster -- and $\prob[0 \in \cC_{\infty}] > 0$.
    \end{enumerate}
\end{assumption}
Let $\Om_0 = \big\{\om \in \Om \,:\, 0 \in \cC_{\infty}(\om) \big\}$ and introduce the conditional measure
\begin{align}
  \prob_0[\,\cdot\,] \;\ldef\; \prob[\,\cdot\, |\, 0 \in \cC_{\infty}],
\end{align}
and we write $\mean_0$ to denote the expectation with respect to $\prob_0$.  We denote by $d^{\om}$ the natural graph distance on $(\cC_{\infty}(\om), \cO(\om))$, in the sense that for any $x, y \in \cC_{\infty}(\om)$, $d^{\om}(x,y)$ is the minimal length of a path between $x$ and $y$ that consists only of edges in $\cO(\om)$.  For $x \in \cC_{\infty}(\om)$ and $r \geq 0$, let $B^{\om}(x,r) \ldef {\{ y \in \cC_{\infty}(\om) : d^{\om}(x, y) \leq \lfloor r \rfloor\}}$ be the closed ball with center $x$ and radius $r$ with respect to $d^{\om}$, and we write $B(x,r) \ldef \{y \in \bbZ^d \,:\, |y-x|_1 \leq \lfloor r \rfloor\}$ for the corresponding closed ball with respect to the $\ell^1$-distance on $\bbZ^d$.  Further, for a given subset $B \subset \bbZ^d$ we denote by $|B|$ the cardinality of $B$, and we define the \emph{relative} boundary of $A \subset B$ by
\begin{align*}
  \partial_{\!B}^{\om} A
  \;\ldef\;
  \big\{
    \{x,y\} \in \cO(\om)
    \;:\;
    x \in A \,\text{ and }\, y \in B \setminus A
  \big\}
\end{align*}
and we simply write $\partial^{\om} A$ if $B \equiv \cC_{\infty}(\om)$.  The corresponding boundary on $(\bbZ^d, E_d)$ is denoted by $\partial_B A$ and $\partial A$, respectively.
\begin{definition}[regular balls]\label{def:regular}
   Let $C_{\mathrm{V}} \in (0, 1]$, $C_{\mathrm{riso}} \in (0, \infty)$ and $C_{\mathrm{W}} \in [1, \infty)$ be fixed constants.  For $x \in \cC_{\infty}(\om)$ and $n \geq 1$, we say a ball $B^{\om}(x, n)$ is \emph{regular} if it satisfies the following conditions:
   \begin{enumerate}[i)]
   \item volume regularity of order $d$:
     \begin{align}\label{eq:ass:balls}
       C_{\mathrm{V}}\, n^d \;\leq\; |B^{\om}(x, n)|
     \end{align}
   \item (weak) relative isoperimetric inequality: There exists $\cS^{\om}(x, n) \subset \cC_{\infty}(\om)$ connected such that $B^{\om}(x, n) \subset \cS^{\om}(x, n) \subset B^{\om}(x, C_{\mathrm{W}} n)$ and
     \begin{align}\label{eq:ass:riso}
       |\partial_{\cS^{\om}(x, n)}^{\om} A|
       \;\geq\;
       C_{\mathrm{riso}}\, n^{-1}\, |A|
     \end{align}
     for every $A \subset \cS^{\om}(x, n)$ with $|A| \leq \tfrac{1}{2}\, |\cS^{\om}(x, n)|$.
   \end{enumerate}
 \end{definition}
\begin{assumption}[$\th$-very regular balls]\label{assumption:cluster}
  For some $\th \in (0,1)$ assume that for $\prob_0$-a.e.\ $\om$ there exists $N_0(\om) < \infty$ such that for all $n \geq N_0(\om)$ the ball $B^{\om}(0, n)$ is $\th$-\emph{very regular}, that is, the ball $B^{\om}(x, r)$ is regular for every $x \in B^{\om}(0, n)$ and $r \geq n^{\th/d}$. 
\end{assumption}
\begin{remark}
  (i) The Euclidean lattice $(\bbZ^d, E_d)$ satisfies the assumption above with $\th = 0$.\\
  (ii) The notion of $\th$-very regular balls is particularly useful in the context of random graphs, e.g.\ supercritical Bernoulli percolation clusters \cite{Ba04} or clusters in percolation models with long range correlations \cite{Sa14} (see the examples below for more details).  Such random graphs have typically a local irregular behaviour, in the sense that the conditions of volume growth and relative isoperimetric inequality fail on small scales.  Roughly speaking, Assumption~\ref{assumption:cluster} provides a uniform lower bound on the radius of regular balls.\\
  (iii) In contrast to the (weak) relative isoperimetric inequality \eqref{eq:ass:riso}, the (standard) isoperimetric inequality on $\bbZ^d$ reads
  \begin{align}\label{eq:iso:Zd}
    |\partial^{\om} A|
    \;\geq\;
    C_{\mathrm{iso}}\, |A|^{(d-1)/d},
    \qquad \forall\, A \subset \bbZ^d.
  \end{align}
  On random graphs, however, such an inequality is true only for large enough sets.  However, under the assumption that the ball $B^{\om}(x,n)$ is $\th$-very regular, the isoperimetric inequality \eqref{eq:iso:Zd} holds for all $A \subset B^{\om}(x,n)$ with $|A| > n^{\th}$; cf.\ Lemma~\ref{lemma:iso:large_sets:random} below.
\end{remark}
For any fixed realization $\om \in \Om$, we are interested in a continuous-time Markov chain, $X = \{X_t : t \geq 0\}$, on $\cC_{\infty}(\om)$.  We refer to $X$ as \emph{random walk among random conductances} or \emph{random conductance model} (RCM).  Set $\mu^{\om}(x) = \sum_{y \sim x} \om(\{x,y\})$, $X$ is the process that waits at the vertex $x \in \cC_{\infty}(\om)$ an exponential time with mean $1/\mu^{\om}(x)$ and then jumps to a vertex $y$ that is connected to $x$ by an open edge with probability $\om(\{x,y\}) / \mu^{\om}(x)$.  Since the holding times are space dependent, this process is also called \emph{variable speed random walk} (VSRW).  The process $X$ is a Markov process with generator, $\cL^{\om}$, acting on bounded functions as
\begin{align}
  \big(\cL^{\om} f\big)(x)
  \;=\;
  \sum_{y \in \bbZ^d} \om(\{x,y\})\, \big( f(y) - f(x) \big),
  \qquad x \in \cC_{\infty}(\om).
\end{align}
We denote by $\Prob_x^{\om}$ the quenched law of the process starting at the vertex $x \in \cC_{\infty}(\om)$.  The corresponding expectation will be denoted by $\Mean_x^{\om}$.  Notice that $X$ is a \emph{reversible} Markov chain with respect to the counting measure. 
\subsection{Main result}
We are interested in the long time behavior of the random walk among random conductances for $\prob_0$-almost every realization $\om$.  In particular, we are aiming at obtaining a quenched functional central limit theorem (QFCLT) for the process $X$ in the following sense.
\begin{definition}
  Set $X_t^{(n)} \ldef \frac{1}{n} X_{t n^2}$, $t \geq 0$.  We say that the \emph{quenched functional CLT} or \emph{quenched invariance principle} holds for $X$, if for every $T>0$ and every bounded continuous function $F$ on the Skorohod space $D([0, T], \bbR^d)$, it holds that $\Mean_0^{\om}[F(X^{(n)})] \to \Mean_0^{\mathrm{BM}}[F(\Si \cdot W)]$ as $n \to \infty$ for $\prob_0$-a.e.\ $\om$, where $(W, \Prob_0^{\mathrm{BM}})$ is a Brownian motion on $\bbR^d$ starting at 0 with covariance matrix $\Si^2 = \Si \cdot \Si^T$.
\end{definition}
Our main result relies on the following integrability condition.
\begin{assumption}[Integrability condition]\label{assumption:pq}
  For some $p, q \in [1, \infty]$ and $\th \in (0, 1)$ with
  \begin{align}
    \frac{1}{p} \,+\, \frac{1}{q}
    \;<\;
    \frac{2(1-\th)}{d-\th},
  \end{align}
  assume that the following integrability condition holds
  \begin{align}
    \mean\big[\om(e)^p\big] \;<\; \infty
    \qquad \text{and} \qquad
    \mean\big[\om(e)^{-q} \indicator_{e \in \cO}\big] \;<\; \infty,
  \end{align}
  where we used the convention that $0/0 = 0$.
\end{assumption}
\begin{theorem}[Quenched invariance principle]\label{thm:QIP:X}
  For $d \geq 2$ suppose that $\th \in (0, 1)$ and $p, q \in [1, \infty]$ satisfy Assumptions~\ref{assumption:law}, \ref{assumption:cluster} and \ref{assumption:pq}.  Then, the QFCLT holds for the process $X$ with a deterministic and non-degenerate covariance matrix $\Si^2$.
\end{theorem}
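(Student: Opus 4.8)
The plan is to follow the well-established corrector (harmonic embedding) method for proving quenched invariance principles, adapted to the degenerate ergodic setting with the new isoperimetric input. First I would set up the \emph{environment seen from the particle}: since $X$ is reversible with respect to the counting measure on $\cC_\infty(\om)$, the process $\om_t \ldef \tau_{X_t}\om$ is a Markov process on $\Om_0$ with $\prob_0$ as a reversible (hence invariant) and ergodic measure, ergodicity following from Assumption~\ref{assumption:law}(i) together with uniqueness of the infinite cluster. Next I would construct the corrector $\chi\colon \Om_0 \times \bbZ^d \to \bbR^d$ via the standard Lax--Milgram / orthogonal-projection argument in the space $L^2$ of the measure $\prob_0$: one solves, in the weak sense, for the position function $\Phi(\om,x) = x - \chi(\om,x)$ to be $\cL^\om$-harmonic, i.e.\ $\cL^\om\Phi(\om,\cdot)(x) = 0$, so that $M_t \ldef \Phi(\om_0,X_t) = X_t - \chi(\om_0, X_t)$ is a martingale under $\Prob_0^\om$. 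The finiteness of $\mean_0[\om(e)]$ (Assumption~\ref{assumption:law}(ii)) gives the energy bound needed to make the projection argument work and to identify the gradient $\nabla\chi \in L^2(\prob_0)$.

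The argument then splits, as usual, into three pieces. (a) \textbf{Martingale CLT for $M^{(n)}_t = n^{-1}M_{tn^2}$}: the quadratic variation of $M$ has a deterministic linear-in-$t$ limit by the pointwise ergodic theorem applied to the additive functional $\sum_{y}\om(\{X_s,y\})(\Phi(y)-\Phi(X_s))\otimes(\cdots)$ along the trajectory, using ergodicity of $\om_t$; this yields convergence of $M^{(n)}$ to a Brownian motion with covariance $\Si^2 = \mean_0[\sum_{y\sim 0}\om(\{0,y\})(\Phi(\om,y)-\Phi(\om,0))(\Phi(\om,y)-\Phi(\om,0))^T]/\mean_0[\text{normalization}]$. (b) \textbf{Non-degeneracy of $\Si^2$}: one shows $v\cdot\Si^2 v = 0$ forces the linear function $x\mapsto v\cdot x$ to be corrected to a constant, which contradicts sublinearity of $\chi$ unless $v = 0$; equivalently one uses the lower bound on the Dirichlet form coming from $\mean_0[\om(e)^{-q}\indicator_{e\in\cO}]<\infty$. (c) \textbf{Sublinearity of the corrector}: one must show $\max_{x\in B^\om(0,n)} |\chi(\om,x)|/n \to 0$ for $\prob_0$-a.e.\ $\om$. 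This is the step where Assumptions~\ref{assumption:cluster} and \ref{assumption:pq} enter decisively: the $L^1$-sublinearity on average follows from the ergodic theorem, and the upgrade to pointwise $\ell^\infty$-sublinearity requires a Moser iteration on $\cC_\infty(\om)$. The Moser scheme needs the Sobolev inequality on regular balls, which is precisely what the anchored relative isoperimetric inequality in Definition~\ref{def:regular} and the consequence that \eqref{eq:iso:Zd} holds for $|A|>n^\th$ deliver; the exponents $p,q$ and the constraint $\frac1p+\frac1q<\frac{2(1-\th)}{d-\th}$ are exactly what is needed to control the weights appearing in the Sobolev/Poincar\'e constants and in the maximal inequality uniformly in $n$.

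Given sublinearity, the passage from the martingale CLT to the QFCLT for $X^{(n)}$ is routine: $X^{(n)}_t - M^{(n)}_t = n^{-1}\chi(\om_0,X_{tn^2})$, and since $X$ does not move faster than a walk whose jump rates have bounded expectation, a maximal-inequality argument combined with sublinearity of $\chi$ along the trajectory (controlled on the regular balls $B^\om(0, Cn)$, which contain the walk up to time $Tn^2$ with high probability by the volume-regularity lower bound and a heat-kernel / Nash-type on-diagonal estimate, itself a consequence of the isoperimetry) shows this difference converges to $0$ uniformly on $[0,T]$ in $\Prob_0^\om$-probability. Tightness of $X^{(n)}$ then follows from tightness of $M^{(n)}$ plus this negligibility, and Skorohod-space convergence gives the theorem with $\Si$ any square root of $\Si^2$.

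The main obstacle I expect is step (c), the $\ell^\infty$-sublinearity of the corrector: carrying out Moser iteration on the random graph with degenerate conductances requires a careful weighted Sobolev inequality valid on all sufficiently large balls, and the bookkeeping of how the moment exponents $p$ and $q$ propagate through the iteration — in particular ensuring the correction term coming from the bad (small, non-regular) scales of size $\lesssim n^\th$ is genuinely lower order — is the technical heart of the paper. The secondary difficulty is establishing that the anchored relative isoperimetric inequality of Definition~\ref{def:regular} actually holds in the intended examples (supercritical percolation, models with long-range correlations), but for the proof of Theorem~\ref{thm:QIP:X} itself this is assumed, so within the scope of this statement the Moser iteration is the crux.
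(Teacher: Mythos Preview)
Your overall architecture --- corrector construction, martingale CLT for $M^{(n)}$, non-degeneracy, $\ell^\infty$-sublinearity of the corrector, then Slutsky --- matches the paper's. But you have misplaced the main difficulty. You write that ``the $L^1$-sublinearity on average follows from the ergodic theorem'' and that the Moser iteration upgrading $\ell^1$ to $\ell^\infty$ is ``the technical heart of the paper.'' In fact the Moser step is essentially imported from \cite{ADS15} once one has a Sobolev inequality for compactly supported functions (which follows from the isoperimetric inequality for large sets, Lemma~\ref{lemma:iso:large_sets:random}, via an interpolation trick to handle small sets). The genuinely new ingredient is the $\ell^1$-sublinearity itself, Proposition~\ref{prop:sublinearity:1}.

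On $\bbZ^d$ with elliptic conductances one gets $\ell^1$-sublinearity by first proving directional sublinearity along coordinate axes (via the one-dimensional ergodic theorem applied to shift-covariant gradients) and then averaging. On a random graph this route is unavailable: there are no coordinate lines inside $\cC_\infty(\om)$, and the paper explicitly avoids any a~priori control on chemical distance or hole sizes. What replaces it is the \emph{anchored} relative isoperimetric inequality (Lemma~\ref{lemma:riso:weight}) and the resulting anchored Sobolev inequality (Proposition~\ref{prop:Sobolev1:weighted}), which bounds $\sum_{x\in B^\om(n)}|u(x)|$ for functions vanishing only at the origin by a \emph{weighted} Dirichlet-type sum with weights $w_n(\{x,y\})\asymp (n/|x|)^{d-\ve}$. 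Applying this to $u=\chi_j-\mathrm{D}\vp_{j,k}$ and then invoking a weighted extension of Birkhoff's theorem (Appendix~\ref{appendix:ergodic}) to control the resulting sum is what gives $\|\tfrac{1}{n}\chi_j\|_{1,B^\om(n)}\to 0$. The ordinary spatial ergodic theorem alone does not do this.

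Two smaller points: the paper proves non-degeneracy of $\Si^2$ using only the $\ell^1$-sublinearity (not $\ell^\infty$), see the proof of Proposition~\ref{prop:QIP:martingale}; and your expression for $\Si^2$ should have no normalization in the denominator, since the VSRW is reversible with respect to counting measure.
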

\begin{remark}
  If the law $\prob$ of the conductances is invariant under reflection and rotation of $\bbZ^d$ by $\pi/2$, the limiting Brownian motion is isotropic in the sense that its covariance matrix $\Si^2$ is of the form $\Si^2 = \si^2 I$ for some $\si > 0$. (Here $I \in \bbR^{d \times d}$ denotes the identity matrix.)
\end{remark}
\begin{remark}
  Consider the Markov process $Y = \{Y_t : t \geq 0\}$ on $\cC_{\infty}(\om)$ that waits at the vertex $x \in \cC_{\infty}(\om)$ an exponential time with mean $1$ and then jumps to a neighboring vertex $y$ with probability $\om(\{x,y\})/\mu^{\om}(x)$.  This process is also called \emph{constant speed random walk} (CSRW).  Notice that the process $Y$ can be obtained from the process $X$ by a time change, that is $Y_t \ldef X_{a_t}$ for $t \geq 0$, where $a_t \ldef \inf\{s \geq 0 : A_s > t\}$ denotes the right continuous inverse of the functional
  \begin{align*}
    A_t 
    \;\ldef\;
    \int_0^t \mu^{\om}(X_s)\, \md s,
    \qquad t \geq 0.
  \end{align*}
  By the ergodic theorem and Lemma~\ref{lemma:ENVI}, we have that $\lim_{t \to \infty} A_t / t = \mean_0[\mu^{\om}(0)]$ for $\prob_0$-a.e\ $\om$.  Hence, under the assumptions of Theorem~\ref{thm:QIP:X}, the rescaled process $Y$ converges to a Brownian motion on $\bbR^d$ with deterministic and non-degenerate covariance matrix $\Si_Y^2 = \mean_0[\mu^{\om}(0)]^{-1} \Si^2$, see \cite[Section~6.2]{ABDH13}.
\end{remark}
\begin{remark}
  Notice that Assumption~\ref{assumption:law} and the remark above implies that $\prob_0$-a.s.\ the process $X$ does not explode in finite time.
\end{remark}
Random walks among random conductances is one of the most studied examples of random walks in random environments.  Since the pioneering works of De Masi, Ferrari, Goldstein and Wick \cite{dMFGW89} and Kipnis and Varadhan \cite{KV86} which proved a \emph{weak FCLT} for stationary and ergodic laws $\prob$ with $\mean[\om(e)] < \infty$, in the last two decades much attention has been devoted to obtain a \emph{quenched} FCLT.

For i.i.d.\ environments ($\prob$ is a product measure), it turns out that no moment conditions are required.  Based on the previous works by Mathieu \cite{Ma08}, Biskup and Prescott \cite{BP07}, Barlow and Deuschel \cite{BD10} (for similar results for simple random walks on supercritical Bernoulli percolation clusters see also Sidoravious and Sznitman \cite{SS04}, Berger and Biskup \cite{BB07}, Mathieu and Piatniski \cite{MP07}) it has been finally shown by Andres, Barlow, Deuschel and Hambly \cite{ABDH13} that a QFCLT for i.i.d.\ environments holds provided that $\prob_0[\om(e) > 0] > p_c$ with $p_c \equiv p_c(d)$ being the bond percolation threshold.  Recently, Procaccia, Rosenthal and Sapozhnikov \cite{PRS13} have studied a quenched invariance principle for simple random walks on a certain class of percolation models with long range correlations including random interlacements and level sets of the Gaussian Free Field (both in $d \geq 3$).

For general ergodic, elliptic environments, $\prob[0 < \om(e) < \infty] = 1$, where the infinite connected component $\cC_{\infty}(\om)$ coincides with $\bbZ^d$, the first moment condition on the conductances, $\mean[\om(e)] < \infty$ and $\mean[\om(e)^{-1}] < \infty$, is necessary for a QFCLT to hold, see Barlow, Burdzy and Tim\'ar \cite{BBT16, BBT15}.  The uniformly elliptic situation, treated by  Boivin \cite{Bo93}, Sidoravious and Sznitman \cite{SS04} (cf.\ Theorem~1.1 and Remark~1.3 therein), Barlow and Deuschel \cite{BD10}, has been relaxed by Andres, Deuschel and Slowik \cite{ADS15} to the condition in Assumption~\ref{assumption:pq} with $\th = 0$.  As it turned out, for the constant speed random walk $Y$ as defined above, this moment condition is optimal for a \emph{quenched local limit theorem} to hold, see \cite{ADS16a}.  In dimension $d = 2$, Biskup proved a QFCLT under the (optimal) first moment condition, and it is an open problem if this remains true in dimensions $d \geq 3$.

In this paper, we are interested in the random conductance model beyond the elliptic setting. We prove a quenched invariance principle in the case of stationary and ergodic laws under mild assumptions on geometric properties of the resulting clusters and on the integrability of $\prob$.  This framework includes the models considered in \cite{ADS15} and \cite{PRS13}.  The main novelty is a new anchored relative isoperimetric inequality (Lemma~\ref{lemma:riso:weight}) that is used to show in a robust way the $\ell^1$-sublinearity of the corrector (for more details see below).  Another important aspect is that neither an a priori knowledge on the distribution of the size of holes in the connected components nor on properties of the chemical distance is needed.  In particular, our proof does not rely on the directional sublinearity of the corrector.
\medskip

In the sequel, we give a brief list of motivating examples of probability measures on $[0, \infty)^{E_d}$ for the conductances.
\begin{example}[Supercritical Bernoulli percolation cluster]
  Consider a supercritical Bernoulli bond percolation $\{\om(e) : e \in E_d\}$, that is, $\om(e) \in \{0, 1\}$ are i.i.d.\ random variables with $\prob[\omega(e) = 1] > p_{\mathrm{c}}$. The almost sure existence of a unique infinite cluster is guaranteed by Burton--Keane's theorem, while Assumption~\ref{assumption:cluster} on $\theta$-very regular balls for any $\th \in (0, 1)$ follows from a series of results in \cite{Ba04}: Theorem~2.18 a), c) together with Lemma~2.19, Proposition~2.11 (combined with Lemma~1.1), and Proposition~2.12~a).  More precisely, we choose $\cS^{\om}(0,n)$ as the largest cluster $\cC^{\vee}(Q_1)$ where $Q_1$ is the smallest special cube appearing in the proof of \cite[Theorem 2.18]{Ba04}.  In this case, our result on the quenched invariance principle Theorem \ref{thm:QIP:X} contains the ones in \cite{SS04, MP07, BP07, BB07}.
\end{example}
\begin{example}[Percolation clusters in models with long-range correlations]
  Consider a family of probability measure $\prob^{u}$ on $\{0, 1\}^{\bbZ^d}$ indexed by $u \in (a,b)$ that satisfies the assumptions \textbf{P1--P3}, \textbf{S1} and \textbf{S2} in \cite{Sa14}.  For a given sample $\{\eta(x) : x \in \bbZ^d\}$ of $\prob^u$, we set 
\begin{align*}
  \om(\{x,y\}) \;=\; \eta(x) \cdot \eta(y)
  \qquad \forall\, \{x,y\} \in E_d.
\end{align*}
For any fixed $u \in (a,b)$, set $\prob = \prob^u \circ\, \om^{-1}$.  Obviously, $\prob$ is ergodic with respect to translations of $\bbZ^d$.  In view of \cite[Remark~1.9 (2)]{Sa14}, there exists $\prob$-a.s.\ a unique infinite cluster.  Hence, Assumption~\ref{assumption:law} is satisfied.  Moreover, Assumption~\ref{assumption:cluster} on $\th$-very regular balls for any $\th \in (0, 1)$ follows from \cite[Proposition~4.3]{Sa14} with $\ve = 1/d$.  Therefore, the QFCLT for the simple random walk on percolation clusters given by $\om$ holds true.  In particular, the strategy used in showing Theorem~\ref{thm:QIP:X} provides an alternative proof of \cite[Theorem~1]{PRS13}.
\end{example}
Let us consider a more general model in which random walks move on percolation clusters with arbitrary jump rates.
\begin{example}[RCM defined by level sets of the Gaussian Free Field]
  Consider the discrete Gaussian Free Field $\phi = \{\phi(x) : x \in \bbZ^d\}$ for $d \geq 3$, i.e.\ $\phi$ is a Gaussian field with mean zero and covariances given by the Green function of the simple random walk on $\bbZ^d$.  The excursion set of the field $\phi$ above level $h$ is defined as $V_{\geq h}(\phi) \ldef \{x \in \bbZ^d : \phi(x) \geq h \}$, which can be considered as vertex set of the random graph of with edge set $E_{\geq h}(\phi) \ldef \{\{x,y\} : \phi(x) \wedge \phi(y) \geq h \}$.  It is well known \cite{BLM87, RS13} that there exists a threshold $h_* = h_*(d) \in [0, \infty)$ such that almost surely the graph $\big(V_{\geq h}(\phi), E_{\geq h}(\phi)\big)$ contains
  \begin{itemize}
  \item[(i)] for $h < h_*$, a unique infinite connected component;
  \item[(ii)] for $h > h_*$, only finite connected components.
  \end{itemize}
  We are interested in the first case, where the family $\{\prob^{h_*-h}$, $h \in (a,b)\},$ with $\prob^u$ denoting the law of the site percolation process $\{\indicator_{\phi(x)\geq u}:x\in\mathbb{Z}^d\}$, satisfies for some $0 < a < b <\infty$ the assumptions \textbf{P1--P3}, \textbf{S1} and \textbf{S2} in \cite{Sa14} (for more details, see Subsection~1.1.2 therein).  For $h \in (a,b)$, define 
  \begin{align*}
    \om(\{x, y\})
    &\;=\;
    \exp\!\big(\phi(x) + \phi(y) \big)\,
    \indicator_{|\phi(x)| \wedge |\phi(y)| \,\geq\, h_*-h}
    \qquad \forall\, \{x,y\} \in E_d,
  \end{align*}
  and denote by $\prob$ the corresponding law.  In view of \cite[Proposition 4.3]{Sa14}, Assumptions \ref{assumption:law} and \ref{assumption:cluster} are satisfied.  Since $\mean[\om(e)^p] < \infty$ and $\mean[\om(e)^{-q} \indicator_{\om(e)>0}] < \infty$ for every $p, q \in (0,\infty)$, Theorem~\ref{thm:QIP:X} holds for this random conductance model.
\end{example}

\subsection{The method}
We follow the most common approach to prove a QFCLT that is based on \emph{harmonic embedding}, see \cite{Bi11} for a detailed exposition of this method.  A key ingredient of this approach is the \emph{corrector}, a random function,  $\chi\!:\Om \times \bbZ^d \to \bbR^d$ satisfying $\bbP_0$-a.s.\ the following \emph{cocycle property}
\begin{align*}
  \chi(\om, x + y) - \chi(\om, x)
  \;=\;
  \chi(\tau_x\om, y),
  \qquad x,y \in \cC_{\infty}(\om).
\end{align*}
such that $|\chi(\om, x)| = o(|x|)$ as $|x| \to \infty$ and
\begin{align*}
  \Phi(\om, x) \;=\; x - \chi(\om, x)
\end{align*}
is an $\cL^\om$-harmonic function in the sense that $\prob_0$-a.s.\
\begin{align*}
  \cL^\om \Phi(\om, x)
  \;=\;
  \sum_{y} \om(\{x,y\}) \big( \Phi(\om, y) - \Phi(\om, x) \big)
  \;=\;
  0,
  \qquad \forall\, x \in \cC_{\infty}(\om).
\end{align*}
This can be rephrased by saying that $\chi$ is a solution of the Poisson equation
\begin{align*}
  \cL^{\om} u \;=\; \cL^{\om} \Pi
\end{align*}
where $\Pi$ denotes the identity mapping on $\bbZ^d$.  The existence of $\chi$ is guaranteed by Assumption~\ref{assumption:law}.  Further, the $\cL^{\om}$-harmonicity of $\Phi$ implies that
\begin{align*}
  M_t \;=\; X_t - \chi(\om, X_t)
\end{align*}
is a martingale under $\Prob_{\!0}^{\om}$ for $\prob_0$-a.e.\ $\om$, and a QFCLT for the martingale part $M$ can be easily shown by standard arguments.  In order to obtain a QFCLT for the process $X$, by Slutsky's theorem, it suffices to show that for any $T > 0$ and $\prob_0$-a.e\ $\om$
\begin{align}\label{eq:conv:prob:intro}
  \sup_{0 \,\leq\, t \,\leq\, T}\,
  \frac{1}{n}\, \big| \chi(\om, X_{t n^2}) \big|
  \;\underset{n \to \infty}{\longrightarrow}\;
  0 
  \quad \text{ in $\Prob_{0}^\om$-probability},
\end{align}
which can be deduced from $\ell^{\infty}$-sublinearity of the corrector:
\begin{align} \label{eq:sublin_intro}
  \lim_{n \to \infty}  \max_{x \in B^{\om}(0,n)} \frac{1}{n}\, \big| \chi(\om, x) \big|
  \;=\;
  0
  \qquad \prob_0\!\text{-a.s}.
\end{align}

The main challenge in the proof of the QFCLT is to show \eqref{eq:sublin_intro}.  In a first step we show that the rescaled corrector converges to zero $\prob_0$-a.s.\ in the space averaged norm $\|\!\cdot\!\|_{1, B^{\om}(0, n)}$ (see Proposition~\ref{prop:sublinearity:1} below).  A key ingredient in the proof is a new anchored relative isoperimetric inequality (Lemma~\ref{lemma:riso:weight}) and an extension of Birkhoff's ergodic theorem, see Appendix~\ref{appendix:ergodic} for more details.  In a second step, we establish a maximal inequality for the solution of a certain class of Poisson equations using a Moser iteration scheme.  As an application, the maximum of the rescaled corrector in the ball $B^{\om}(0, n)$ can be controlled by the corresponding $\|\!\cdot\!\|_{1, B^{\om}(0, n)}$-norm.  In the case of elliptic conductances such a Moser iteration has already been implemented in order to show a QFCLT \cite{ADS15}, a local limit theorem and elliptic and parabolic Harnack inequalities \cite{ADS16a} as well as upper Gaussian estimates on the heat kernel \cite{ADS16b}.  The Moser iteration is based on a Sobolev inequality for functions with compact support which follows in the case of elliptic conductances ($\cC_{\infty}(\om) \equiv \bbZ^d$) from the isoperimetric inequality \eqref{eq:iso:Zd} on $\bbZ^d$.  Since such an isoperimetric inequality on random graphs is true only for sufficiently large sets (Lemma~\ref{lemma:iso:large_sets:random}), the present proof of the Sobolev inequality relies on an interpolation argument in order to deal with the small sets (see Lemma~\ref{lemma:iso:all_sets} below). 
\medskip

The paper is organized as follows: In Section~\ref{sec:QIP}, we prove our main result.  After recalling the construction of the corrector and proving the convergence of the martingale part, we show the $\ell^1$- and $\ell^{\infty}$-sublinearity of the corrector.  The proof of the $\ell^1$-sublinearity is based on an anchored Sobolev inequality that we show in a more general context in Section~\ref{sec:Sobolev}.  Finally, the Appendix contains an ergodic theorem that is needed in the proofs.

Throughout the paper, we write $c$ to denote a positive constant that may change on each appearance, whereas constants denoted by $C_i$ will be the same through each argument. 

\section{Quenched invariance principle}\label{sec:QIP}
Throughout this section we suppose that Assumption~\ref{assumption:law} holds.

\subsection{Harmonic embedding and the corrector}
In this subsection, we first construct a corrector to the process $X$ such that $M_t = X_t - \chi(\om, X_t)$ is a martingale under $\Prob_{\!0}^{\om}$ for $\prob$ a.e.\ $\om$.  Second, we prove an invariance principle for the martingale part.
\begin{definition} 
  A measurable function, also called a random field, $\Psi\!: \Om \times \bbZ^d \to \bbR$ satisfies the cocycle property if for $\prob_0$-a.e. $\om$, it hold that
  \begin{align*}
    \Psi(\tau_x\om, y-x)
    \;=\;
    \Psi(\om, y) - \Psi(\om, x),
    \qquad \text{for } x, y \in \cC_{\infty}(\om).
  \end{align*}
  We denote by $L^2_\mathrm{cov}$ the set of functions $\Psi\!: \Om \times \bbZ^d \to \bbR$ satisfying the cocycle property such that
  \begin{align*}
    \Norm{\Psi}{L_\mathrm{cov}^2}^2
    \;\ldef\;
    \mean_0\!\Big[{\textstyle \sum_{x \sim 0}}\, \om(\{0, x\})\, |\Psi(\om,x)|^2\Big]
    \;<\;
    \infty.
  \end{align*}
\end{definition} 
In the following lemma we summerize some properties of functions in $L_{\mathrm{cov}}^2$.
\begin{lemma}\label{lemma:l2cov}
  For all $\Psi \in L^2_{\mathrm{cov}}$, we have
  \begin{enumerate}[(i)]
    \setlength{\itemsep}{.5ex}
  \item $\Psi(\om, 0) = 0$ and $\Psi(\tau_x \om, -x) = \Psi(\om, x)$ for any $x \in \cC_{\infty}(\om)$ and $\om \in \Om_0$,
  \item $\Norm{\Psi}{L^2_{\mathrm{cov}}} = 0$, if and only if, $\Psi(\om, x) = 0$ for all $x \in \cC_{\infty}(\om)$ and $\prob_0$-a.e.\ $\om \in \Om_0$.
  \end{enumerate}
\end{lemma}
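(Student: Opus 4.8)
For part (i), I would start directly from the cocycle property with the choice $x = y = 0$, which gives $\Psi(\om, 0) = \Psi(\om, 0) - \Psi(\om, 0) = 0$ for $\prob_0$-a.e.\ $\om$ (note $0 \in \cC_\infty(\om)$ on $\Om_0$). This handles the first claim, though one should be slightly careful: the cocycle identity is stated for $\prob_0$-a.e.\ $\om$, and one wants it to hold for \emph{all} $x \in \cC_\infty(\om)$ simultaneously on a single full-measure set, which is fine since the exceptional null set can be chosen uniformly. For the second identity, I would apply the cocycle property with the pair $(x, y)$ replaced by $(x, 0)$: this yields $\Psi(\tau_x \om, 0 - x) = \Psi(\om, 0) - \Psi(\om, x) = -\Psi(\om, x)$, using the first part. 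Wait — that gives $\Psi(\tau_x\om, -x) = -\Psi(\om,x)$, not $+\Psi(\om,x)$ as stated. I need to reconcile this with the stated conclusion. Looking again: the cocycle property reads $\Psi(\tau_x \om, y - x) = \Psi(\om,y) - \Psi(\om,x)$. Hmm, so with $y = 0$ I get $\Psi(\tau_x\om,-x) = -\Psi(\om,x)$. Since the lemma claims equality to $+\Psi(\om,x)$, there may be a sign convention in the paper (or the intended reading is that one applies it the other way: set $\tilde\om = \tau_x\om$ and use translation-covariance to rewrite). The cleanest route: note $\tau_{-x}(\tau_x\om) = \om$ and $x \in \cC_\infty(\om)$ iff $-x \in \cC_\infty(\tau_x\om)$; apply the cocycle property for the environment $\tau_x\om$ with the pair $(-x, 0)$, giving $\Psi(\om, x) = \Psi(\tau_x\om \text{ shifted}\ldots)$. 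I would carefully track which sign convention the authors use and present whichever chain of two applications of the cocycle identity lands on the stated formula; this is purely bookkeeping.

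For part (ii), the ``if'' direction is immediate: if $\Psi(\om, x) = 0$ for all $x \in \cC_\infty(\om)$ and $\prob_0$-a.e.\ $\om$, then every summand in the definition of $\Norm{\Psi}{L^2_{\mathrm{cov}}}^2$ vanishes, so the norm is zero. For the ``only if'' direction, suppose $\Norm{\Psi}{L^2_{\mathrm{cov}}} = 0$. Then $\mean_0[\sum_{x \sim 0} \om(\{0,x\})\,|\Psi(\om,x)|^2] = 0$, so for $\prob_0$-a.e.\ $\om$ we have $\om(\{0,x\})\,|\Psi(\om,x)|^2 = 0$ for every neighbor $x$ of $0$; since $\om(\{0,x\}) > 0$ precisely when $\{0,x\} \in \cO(\om)$, this forces $\Psi(\om, x) = 0$ for all $x$ with $x \sim 0$. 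The task is then to propagate this from nearest neighbors of $0$ to all of $\cC_\infty(\om)$. The mechanism is the cocycle property combined with stationarity and ergodicity: by translation-invariance of $\prob_0$ on the relevant event (more precisely, using that the induced shift on $\Om_0$ is measure-preserving — this is the standard point-of-view-of-the-particle / environment seen from the walk construction), the set $\{\om : \Psi(\om, x) = 0 \text{ for all } x \sim 0 \text{ in } \cO(\om)\}$ has full $\prob_0$-measure, and hence so does its ``translate'' along any fixed path. Concretely, fix $\om$ in a suitable full-measure set; for $z \in \cC_\infty(\om)$ with $z \sim 0$ we already know $\Psi(\om, z) = 0$. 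For a general $y \in \cC_\infty(\om)$, pick an open path $0 = z_0 \sim z_1 \sim \cdots \sim z_k = y$ and write, via the cocycle property, $\Psi(\om, y) = \sum_{i=0}^{k-1} \Psi(\tau_{z_i}\om, z_{i+1} - z_i)$, where each $z_{i+1} - z_i$ is a neighbor of $0$ in $\cO(\tau_{z_i}\om)$. Each term vanishes provided $\tau_{z_i}\om$ lies in the good set; ensuring this for all $i$ along all paths simultaneously is exactly where stationarity/ergodicity enters — one takes the good set to be shift-invariant (intersect over all $\bbZ^d$-translates, which stays full measure by stationarity) so that $\tau_{z_i}\om$ is automatically good.

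The main obstacle is this last measure-theoretic propagation step: making rigorous that the pointwise vanishing ``at $0$'' upgrades to vanishing ``everywhere on the cluster'' for a single almost-every $\om$. The subtlety is that $\tau_{z_i}\om$ need not stay in $\Om_0$ a priori and the null sets must be handled uniformly; the standard fix is to work with the shift-invariant hull of the good event and invoke stationarity of $\prob$ (equivalently, that $0 \in \cC_\infty(\tau_z\om) \iff z \in \cC_\infty(\om)$ and the relative boundary/cocycle structure is shift-compatible). Everything else — part (i) and the ``if'' direction of (ii) — is a one-line consequence of the definitions, modulo chasing the sign convention.
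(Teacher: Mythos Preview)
Your approach is correct and essentially the same as the paper's: for (ii), the paper likewise uses stationarity of $\prob$ to shift the vanishing of $\om(\{0,x\})|\Psi(\om,x)|^2$ from the origin to every $y\in\bbZ^d$, intersects the resulting countably many full-measure sets, and then runs the identical path argument along open edges (note that only stationarity, not ergodicity, is invoked). Your sign observation in (i) is also correct --- the cocycle identity with $y=0$ yields $\Psi(\tau_x\om,-x)=-\Psi(\om,x)$, so the stated formula carries a typo; the paper itself simply says ``follows from the definition'' and does not use the sign anywhere.
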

\begin{proof}
  (i) follows from the definition.\\
  (ii) ''$\Leftarrow$''  The assertion follows immediately from the  definition of $\|\cdot\|_{L_{\mathrm{cov}}^2}$.\\
  ''$\Rightarrow$''  Suppose that $\Norm{\Psi}{L_{\mathrm{cov}}^2} = 0$.  By using the stationarity of $\prob$ and the cocycle property, we obtain that, for any $y \in \bbZ^d$,
  \begin{align}\label{eq:l2cov:shift}
    0
    &\;=\;
    \mean\!%
    \Big[
      {\textstyle \sum_{x \sim 0}}\,
      (\tau_y\om)(\{0,x\})\, \Psi(\tau_y \om, x)^2\,
      \indicator_{0 \in \cC_{\infty}(\tau_y \om)}
    \Big]
    \nonumber\\[.5ex]
    &\;=\;
    \mean\!%
    \Big[
      {\textstyle \sum_{x \sim 0}}\, \om(\{y, y+x\})\,
      \big|\Psi(\om, y + x) - \Psi(\om,y)\big|^2\,
      \indicator_{y \in \cC_{\infty}(\om)}
    \Big].
  \end{align}
  Hence, for any $y \in \bbZ^d$ there exists $\Om_y^* \subset \Om$ such that $\prob[\Om_y^*] = 1$ and for all $\om \in \Om_*$
  \begin{align}\label{eq:l2cov:zero}
    \om(\{y,y + x\})\, \big|\Psi(\om,y + x) - \Psi(\om,y)\big|^2\,
    \indicator_{y \in \cC_{\infty}(\om)}
    \;=\;
    0
    \quad \forall\; |x| = 1. 
  \end{align}
  Set $\Om^*\ldef \bigcap_{y \in \bbZ^d} \Om_y^*$.  Obviously, $\prob[\Om^*] = 1$, and for any $\om \in \Om^*$, \eqref{eq:l2cov:zero} holds true for all $y \in \bbZ^d$.  In particular, for any $\om \in \Om^* \cap \Om_0$ and $z \in \cC_{\infty}(\om)$, there exist $z_0 = 0, \ldots, z_k = z$ with $\{z_i, z_{i+1}\} \in \cO(\om)$ for all $0 \leq i \leq k-1$ such that
  \begin{align*}
    \Psi(\om, z_i) \;=\; \Psi(\om, z_{i+1})
    \qquad 
    \forall\, i = 0, \ldots, k-1.
  \end{align*}
  Hence, $\Psi(\om, z) = \Psi(\om, 0) = 0$.  This completes the proof.
\end{proof}
In particular, it can be checked that $L_{\mathrm{cov}}^2$ is a Hilbert space (cf. \cite{BP07,MP07}).

We say a function $\vp\!: \Om \to \bbR$ is \emph{local} if it only depends on the value of $\om$ at a finite number of edges.  We associate to $\vp$ a (horizontal) gradient $\mD \vp\!: \Om \times \bbZ^d \to \bbR$ defined by
\begin{align*}
  \mD \vp (\om,x)
  \;=\;
  \vp(\tau_x \om) - \vp(\om),
  \qquad x \in \bbZ^d.
\end{align*}
Obviously, if the function $\vp$ is bounded, $\mD \vp$ is an element of $L_{\mathrm{cov}}^2$.  Following \cite{MP07}, we introduce an orthogonal decomposition of the space $L_{\mathrm{cov}}^2$.  Set
\begin{align*}
  L_{\mathrm{pot}}^2
  \;=\;
  \mathop{\mathrm{cl}}
  \big\{ \mD \vp \mid \vp\!: \Om \to \bbR\; \text{ local} \big\}
  \;\text{ in }\;  L_{\mathrm{cov}}^2,
\end{align*}
being the closure in $L_{\mathrm{cov}}^2$ of the set gradients and let $L_{\mathrm{sol}}^2$ be the orthogonal complement of $L_{\mathrm{pot}}^2$ in $L_{\mathrm{cov}}^2$, that is
\begin{align*}
  L_{\mathrm{cov}}^{2}
  \;=\;
  L_{\mathrm{pot}}^2 \oplus L_{\mathrm{sol}}^2.
\end{align*}
In order to define the corrector, we introduce the \emph{position field} $\Pi\!: \Om \times \bbZ^d \to \bbR^d$ with $\Pi(\om, x) = x$.  We  write $\Pi_j$ for the $j$-th coordinate of $\Pi$.  Since $ \Pi_j(\tau_x\om, y-x) = \Pi_j(\om, y) - \Pi_j(\om, x)$ for all $x, y \in \bbZ^d$, the $j$-th component of the position field $\Pi_j$ satisfies the cocycle property for every $\om \in \Om_0$.  Moreover,
\begin{align}
  \Norm{\Pi_j}{L_\mathrm{cov}^2}^2 
  \;=\;
  \mean_0\!\Big[{\textstyle \sum_{x \sim 0}}\; \om(\{0,x\})\, |x_j|^2\Big]
  \;=\;
  2 \mean_0\!\big[\om(\{0, e_j\})\big]
  \;<\;
  \infty, 
\end{align}
where $e_j$ denotes the $j$-th coordinate unit vector.  Hence, $\Pi_j \in L_{\mathrm{cov}}^2$.  So, we can define $\chi_j \in L_{\mathrm{pot}}^2$ and $\Phi_j \in L_{\mathrm{sol}}^2$ as follows
\begin{align*}
  \Pi_j
  \;=\;
  \chi_j \,+\, \Phi_j
  \;\in\;
  L_{\mathrm{pot}}^2 \oplus L_{\mathrm{sol}}^2.
\end{align*}
This defines the corrector $\chi = (\chi_1, \dots, \chi_d) : \Om \times \bbZ^d \to \bbR^d$.  Further, we set
\begin{align}\label{eq:def:M}
  M_t \;=\; \Phi(\om, X_t) \;=\; X_t - \chi(\om, X_t).
\end{align}
The following proposition summarizes the properties of $\chi$, $\Phi$ and $M$; see, for example, \cite{ABDH13}, \cite{BD10} or \cite{Bi11} for detailed proofs.
\begin{prop} \label{prop:constr_corr}
  For $\prob_0$-a.e.\ $\om$, we have
  \begin{align} \label{eq:Phi}
    \cL^{\om} \Phi(x)
    \;=\;
    \sum_{y \sim x} \om(\{x,y\}) \big(\Phi(\om, y) - \Phi(\om,x) \big)
    \;=\;
    0
    \qquad \forall\, x \in \cC_{\infty}(\om).
  \end{align}
  In particular, for $\prob_0$-a.e.\ $\om$ and for every $v \in \bbR^d$, $M$ and $v\cdot M$ are $\Prob_0^{\om}$-martingales with respect to the filtration $\cF_t = \si(X_s, s \leq t)$.  The quadratic variation process of the latter is given by
  \begin{align}
    \langle v \cdot M \rangle_t
    \;=\;
    \int_0^t
      \sum_{x \sim 0}\, (\tau_{X_s}\om )(\{0,x\})\,
      \big(v \cdot \Phi(\tau_{X_s}\om,x)\big)^2\;
    \md s.
  \end{align}
\end{prop}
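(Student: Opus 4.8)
The plan is to deduce \eqref{eq:Phi} from the fact that $\Phi_j \in L^2_{\mathrm{sol}} = (L^2_{\mathrm{pot}})^{\perp}$, and then to obtain the martingale property from Dynkin's formula together with the explicit quadratic variation. Introduce the local-in-space function
\[
  g_j(\om) \;\ldef\; \sum_{x\sim 0}\om(\{0,x\})\,\Phi_j(\om,x),
\]
which by Lemma~\ref{lemma:l2cov}(i) equals $(\cL^{\om}\Phi_j)(\om,0)$ since $\Phi_j(\om,0)=0$. Using the cocycle property $\Phi_j(\tau_x\om,y-x)=\Phi_j(\om,y)-\Phi_j(\om,x)$ and the identity $(\tau_x\om)(\{0,z\})=\om(\{x,x+z\})$, one checks that $(\cL^{\om}\Phi_j)(\om,x)=g_j(\tau_x\om)$ for every $x\in\cC_{\infty}(\om)$. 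Thus \eqref{eq:Phi} is equivalent to $g_j=0$ $\prob_0$-a.s. Note that $g_j\in L^1(\prob_0)$: by Cauchy--Schwarz $\mean_0[|g_j|]\le \mean_0[\mu^{\om}(0)]^{1/2}\,\Norm{\Phi_j}{L^2_{\mathrm{cov}}}<\infty$ by Assumption~\ref{assumption:law}(ii).

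To show $g_j=0$, fix a bounded local $\vp\colon\Om\to\bbR$, so that $\mD\vp\in L^2_{\mathrm{pot}}$ and hence $\langle\Phi_j,\mD\vp\rangle_{L^2_{\mathrm{cov}}}=0$. Expanding the inner product and isolating the term involving $\vp(\om)$, this orthogonality reads
\[
  \mean_0\!\Big[\sum_{x\sim 0}\om(\{0,x\})\,\Phi_j(\om,x)\,\vp(\tau_x\om)\Big]
  \;=\; \mean_0\!\big[g_j(\om)\,\vp(\om)\big].
\]
On the other hand, applying to each of the finitely many summands on the left the measure-preserving shift $\tau_{-x}$, and using the stationarity of $\prob$, the cocycle identity $\Phi_j(\tau_{-x}\om,x)=-\Phi_j(\om,-x)$, the invariance $\vp(\tau_x\tau_{-x}\om)=\vp(\om)$, and the fact that on $\{\{0,x\}\in\cO(\om)\}$ one has $\indicator_{0\in\cC_{\infty}(\om)}=\indicator_{x\in\cC_{\infty}(\om)}$ (so the conditioning in $\mean_0$ transforms correctly), the same left-hand side equals $-\mean_0[g_j(\om)\vp(\om)]$. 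Comparing, $\mean_0[g_j\vp]=0$ for every bounded local $\vp$. Since bounded local functions generate $\cF$ and $g_j\in L^1(\prob_0)$, a monotone-class argument (letting $\vp\to\sign(g_j)$) gives $g_j=0$ $\prob_0$-a.s. Finally, as in the proof of Lemma~\ref{lemma:l2cov}(ii), stationarity yields $\prob[g_j\circ\tau_x=0]=1$ for each $x\in\bbZ^d$, and intersecting over the countably many $x$ shows that $\prob_0$-a.s.\ $(\cL^{\om}\Phi_j)(\om,x)=g_j(\tau_x\om)=0$ for all $x\in\cC_{\infty}(\om)$, which is \eqref{eq:Phi}.

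Given \eqref{eq:Phi} and the non-explosion of the VSRW, Dynkin's formula shows that $M_t=\Phi(\om,X_t)$ and, for each $v\in\bbR^d$, $v\cdot M$ are local $\Prob_0^{\om}$-martingales for $\prob_0$-a.e.\ $\om$. Using the cocycle property once more, the predictable quadratic variation is
\[
  \langle v\cdot M\rangle_t
  \;=\; \int_0^t \sum_{x\sim 0}(\tau_{X_s}\om)(\{0,x\})\,\big(v\cdot\Phi(\tau_{X_s}\om,x)\big)^2\;\md s,
\]
which is exactly the stated formula. To upgrade to a genuine $L^2$-martingale it suffices that $\Mean_0^{\om}[\langle v\cdot M\rangle_t]<\infty$ for $\prob_0$-a.e.\ $\om$; writing the integrand as $F(\tau_{X_s}\om)$ with $F\ge 0$, $F\in L^1(\prob_0)$, and invoking the stationarity of the environment seen from the particle under $\prob_0$ for the VSRW, Fubini gives $\mean_0\big[\Mean_0^{\om}[\langle v\cdot M\rangle_t]\big]=t\,\Norm{v\cdot\Phi}{L^2_{\mathrm{cov}}}^2<\infty$, hence finiteness $\prob_0$-a.s.; consequently $v\cdot M$, and componentwise $M$, is a true martingale.

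The step demanding the most care is the discrete integration by parts, where the shifts $\tau_{-x}$ must be carried out while correctly tracking the infinite-cluster indicator hidden in $\mean_0=\prob[\,\cdot\mid 0\in\cC_{\infty}]$; the identity $\om(\{0,x\})\indicator_{0\in\cC_{\infty}}=\om(\{0,x\})\indicator_{x\in\cC_{\infty}}$ is precisely what makes the conditioning compatible with translation. The other delicate point is the passage from local to genuine martingale, which rests on the (standard) invariance of the environment process under $\prob_0$. Both are routine in this setting; see \cite{ABDH13, BD10, Bi11} for detailed proofs.
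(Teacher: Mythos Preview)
Your argument is correct and is precisely the standard route taken in the references the paper cites in lieu of a proof (\cite{ABDH13,BD10,Bi11}): harmonicity of $\Phi$ follows from $\Phi_j\perp L^2_{\mathrm{pot}}$ via the discrete integration by parts (the shift $\tau_{-x}$ together with $\om(\{0,x\})\indicator_{0\in\cC_{\infty}}=\om(\{0,x\})\indicator_{x\in\cC_{\infty}}$), and the martingale property plus the quadratic variation then come from Dynkin's formula and the carr\'e du champ, rewritten through the cocycle property. The only organizational remark is that your upgrade from local to true martingale invokes the invariance of $\prob_0$ for the environment process, which in the paper is stated as Lemma~\ref{lemma:ENVI} \emph{after} Proposition~\ref{prop:constr_corr}; this is harmless since the invariance is independent of the proposition, but you may want to cite it explicitly.
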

In the sequel, we prove a quenched invariance principle for the martingale part.  This is standard and follows from the ergodicity of the process of the \emph{environment as seen from the particle} $\{\tau_{X_t} \om : t \geq 0\}$ which is a Markov process taking values in the environment space $\Om_0$ with generator
\begin{align*}
  \widehat{\cL}\, \vp(\om)
  \;=\;
  \sum_{x \sim 0} \om(\{0,x\})\, \big( \vp(\tau_x\om) - \vp(\om) \big)
\end{align*}
acting on bounded functions $\vp\!:\Om_0 \to \bbR$.  The following result is a generalization of Kozlov's theorem \cite{Ko85} in the case that the underlying random walk is reversible.
\begin{lemma}\label{lemma:ENVI}
  The measure $\prob_0$ is reversible, invariant and ergodic for the environment process $\{\tau_{X_t} \om : t \geq 0\}$.
\end{lemma}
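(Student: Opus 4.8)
The plan is to verify the three properties—reversibility, invariance, ergodicity—in that order, since reversibility formally implies invariance, and ergodicity is the substantive point.

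\emph{Reversibility.} First I would show that $\prob_0$ is reversible for the generator $\widehat{\cL}$, i.e.\ that $\widehat{\cL}$ is self-adjoint on $L^2(\Om_0, \prob_0)$. Up to the normalization $\prob[0 \in \cC_\infty]$, this amounts to checking, for bounded local $\vp, \ps\!:\Om_0 \to \bbR$, the identity
\begin{align*}
  \mean\!\Big[\indicator_{0 \in \cC_\infty} \sum_{x \sim 0} \om(\{0,x\})\, \big(\vp(\tau_x\om) - \vp(\om)\big)\, \ps(\om)\Big]
  \;=\;
  \mean\!\Big[\indicator_{0 \in \cC_\infty} \sum_{x \sim 0} \om(\{0,x\})\, \big(\ps(\tau_x\om) - \ps(\om)\big)\, \vp(\om)\Big].
\end{align*}
This follows from the ``cycle trick'': in the term $\mean[\indicator_{0\in\cC_\infty}\sum_{x\sim 0}\om(\{0,x\})\vp(\tau_x\om)\ps(\om)]$, apply the shift $\tau_{-x}$ (using stationarity of $\prob$, $\mean[F] = \mean[F\circ\tau_{-x}]$) and the fact that $0 \in \cC_\infty(\om) \iff -x \in \cC_\infty(\tau_{-x}\om) \iff 0 \in \cC_\infty(\tau_{-x}\om)$ together with $x \sim 0$ in $\om$ iff $0 \sim -x$ in $\om$ iff $-x \sim 0$... — more precisely, relabel $x \mapsto -x$ in the outer sum after shifting, so that $\om(\{0,x\})\vp(\tau_x\om)$ becomes, after $\tau_{-x}$, a term $\om(\{-x,0\})\vp(\om)$ summed with weight $\indicator_{-x\in\cC_\infty(\tau_{-x}\om)}$ and $\ps(\tau_{-x}\om)$; summing over $x$ and reindexing gives $\mean[\indicator_{0\in\cC_\infty}\sum_{x\sim 0}\om(\{0,x\})\ps(\tau_x\om)\vp(\om)]$. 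Matching the cross terms and the diagonal terms gives self-adjointness, and since $\widehat{\cL}\indicator = 0$, reversibility (hence invariance, $\mean_0[\widehat{\cL}\vp] = 0$) of $\prob_0$ follows by a density argument from local bounded functions to all of $L^2$.

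\emph{Ergodicity.} This is the heart of the matter. I would argue that any bounded $\vp \in L^2(\Om_0,\prob_0)$ that is invariant for the environment semigroup, i.e.\ $\widehat{\cL}\vp = 0$ in $L^2$, is $\prob_0$-a.s.\ constant. From $\mean_0[\vp\,\widehat{\cL}\vp] = 0$ and the reversibility computation one gets the Dirichlet form identity
\begin{align*}
  0 \;=\; -\mean_0[\vp\,\widehat{\cL}\vp]
    \;=\; \tfrac12\,\mean_0\!\Big[\sum_{x\sim 0}\om(\{0,x\})\,\big(\vp(\tau_x\om)-\vp(\om)\big)^2\Big],
\end{align*}
so $\prob_0$-a.s.\ $\vp(\tau_x\om) = \vp(\om)$ whenever $x \sim 0$. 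As in the proof of Lemma~\ref{lemma:l2cov}(ii), connectedness of $\cC_\infty(\om)$ then propagates this: for $\prob_0$-a.e.\ $\om$ and every $z \in \cC_\infty(\om)$ there is an open path $0 = z_0, \dots, z_k = z$, and translating along it—using that $x \sim 0$ in $\tau_{z_i}\om$ iff $\{z_i,z_{i+1}\}$ is open in $\om$—yields $\vp(\tau_z\om) = \vp(\om)$. Thus $\vp\circ\tau_z = \vp$ on $\Om_0 \cap \tau_{-z}\Om_0$ for all $z$. Extend $\vp$ to all of $\Om$ by $0$ off $\Om_0$; the resulting function is not shift-invariant on $\Om$ directly, so instead I would use the standard device: fix $z \in \bbZ^d$ with $\prob[0\in\cC_\infty,\ z\in\cC_\infty] > 0$ and note the relation $\vp(\tau_z\om) = \vp(\om)$ holds $\prob$-a.s.\ on $\{0,z \in \cC_\infty\}$; combining with stationarity and the fact that, on $\cC_\infty$, the shifts act transitively ``within the cluster'', one shows that the event $\{\vp > c\}$ for any level $c$ is, modulo $\prob_0$-null sets, invariant under every $\tau_z$ once intersected appropriately with $\Om_0$. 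Feeding this into the ergodicity of $\prob$ under $\{\tau_z\}_{z\in\bbZ^d}$ (Assumption~\ref{assumption:law}(i)) forces $\prob[\{\vp>c\}\cap\Om_0] \in \{0, \prob[\Om_0]\}$, whence $\vp$ is $\prob_0$-a.s.\ constant. Since the process $\{\tau_{X_t}\om\}$ is reversible with respect to $\prob_0$ with a Markov generator $\widehat{\cL}$, triviality of the invariant $\sigma$-field—equivalently, that $0$ is a simple eigenvalue of $\widehat{\cL}$—is exactly the ergodicity statement.

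\emph{Main obstacle.} The delicate step is the last one: passing from ``$\vp$ is constant along open paths inside each infinite cluster'' to ``$\vp$ is $\prob_0$-a.s.\ constant'' via the ergodicity of $\prob$ under the $\bbZ^d$-action. The subtlety is that $\vp$ lives on $\Om_0$, which is not shift-invariant, so one cannot directly invoke $\{\tau_z\}$-ergodicity; one must carefully transfer the invariance to a genuine $\bbZ^d$-invariant event on $\Om$ (using uniqueness of the infinite cluster to identify cluster-translates of $\om$ with point-translates of the base point, and a.s.\ finiteness/triviality arguments to discard boundary issues). This is the standard but somewhat technical argument underlying Kozlov-type theorems, and I would model it on the treatments in \cite{ABDH13}, \cite{BD10}, or \cite{BP07,MP07}, citing them for the routine measure-theoretic bookkeeping while giving the Dirichlet-form and path-propagation steps explicitly.
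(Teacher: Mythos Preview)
Your proposal is correct and follows exactly the approach the paper has in mind: the paper's own proof is a two-line sketch that attributes reversibility to stationarity (Assumption~\ref{assumption:law}) and ergodicity to the ergodicity of $\prob$ under shifts together with uniqueness of the infinite cluster, citing \cite[Lemma~4.9]{dMFGW89} for the details you have spelled out. Your identification of the Dirichlet-form step, the path-propagation along the cluster, and the delicate transfer to a genuinely shift-invariant event on $\Om$ is precisely the content of that reference, so there is nothing to add.
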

\begin{proof}
  The reversibility of $\{\tau_{X_t} \om : t \geq 0\}$ with respect to $\prob_0$ follows directly from Assumption~\ref{assumption:law}.   The proof of the ergodicity of the environmental process relies on the ergodicity of $\prob$ with respect to shifts of $\bbZ^d$ and the fact that for $\prob$-a.e.\ $\om$ the infinite cluster, $\cC_{\infty}(\om)$, is unique.  See \cite[Lemma~4.9]{dMFGW89} for a detailed proof.
\end{proof}
In the next proposition we show both the convergence of the martingale part and the non-degeneracy of the limiting covariance matrix.  The proof of the latter, inspired by the argument given in \cite{PRS13} (see also \cite{BB07}), relies on the $\ell^1$-sublinearity of the corrector that we will show below in Proposition~\ref{prop:sublinearity:1}.  
\begin{prop}[QFCLT for the martingale part]\label{prop:QIP:martingale}
  For $\prob_0$-a.e.\ $\om$, the sequence of processes $\{\tfrac{1}{n} M_{t n^2} : t \geq 0\}$ converges in $\Prob_0^{\om}$-probability to a Brownian motion with a deterministic covariance matrix $\Si^2$ given by
  \begin{align*}
  \Si_{ij}^2
  \;=\;
  \mean_0\!%
  \Big[
    {\textstyle \sum_{x \sim 0}}\; \om(\{0,x\})\, \Phi_i(\om, x)\, \Phi_j(\om, x)
  \Big].
  \end{align*}
  Additionally, if $\th \in (0, 1)$ satisfies Assumption~\ref{assumption:cluster} and $\mean[(1/\om(e)) \indicator_{e \in \cO}] < \infty$ for any $e \in E_d$, then the limiting covariance matrix $\Si^2$ is non-degenerate.
\end{prop}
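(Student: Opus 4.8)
The plan is to split the proposition into its two independent assertions and treat them separately.

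\medskip

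\textbf{Part 1: Convergence of the martingale part.}
First I would invoke the Lindeberg--Feller martingale functional CLT (in the form of, e.g., Helland's theorem). By Proposition~\ref{prop:constr_corr}, for $\prob_0$-a.e.\ $\om$ the process $v \cdot M$ is a $\Prob_0^{\om}$-martingale with quadratic variation
\begin{align*}
  \langle v \cdot M \rangle_t
  \;=\;
  \int_0^t g_v(\tau_{X_s}\om)\, \md s,
  \qquad
  g_v(\om) \ldef \sum_{x \sim 0} \om(\{0,x\})\,\big(v\cdot\Phi(\om,x)\big)^2.
\end{align*}
The key point is that $g_v \in L^1(\prob_0)$ since $\Phi = \Pi - \chi$ and $\Pi, \chi \in L^2_{\mathrm{cov}}$, so $g_v \le 2\|\Pi\|_{L^2_{\mathrm{cov}}}^2 + 2\|\chi\|_{L^2_{\mathrm{cov}}}^2$ in $L^1$. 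By Lemma~\ref{lemma:ENVI} the environment process $\{\tau_{X_t}\om\}$ is ergodic with invariant measure $\prob_0$, so the ergodic theorem for additive functionals of a Markov process gives $\frac{1}{n^2}\langle v\cdot M\rangle_{tn^2} = \frac{1}{n^2}\int_0^{tn^2} g_v(\tau_{X_s}\om)\,\md s \to t\,\mean_0[g_v] = t\,(v^T\Si^2 v)$ for $\prob_0$-a.e.\ $\om$ and every $t$. The Lindeberg condition (no large jumps in the rescaled martingale) follows because the jumps of $\frac1n M_{tn^2}$ are of size $\frac1n|\Phi(\tau_{X_{s^-}}\om, e)|$ for a unit vector $e$, and one controls $\sum$ of their squares on $\{|\cdot|>\ve n\}$ again by the ergodic theorem applied to the truncated functional $\sum_{x\sim0}\om(\{0,x\})|\Phi(\om,x)|^2\indicator_{|\Phi(\om,x)|>\ve n}$, whose expectation tends to $0$ by dominated convergence. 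This yields convergence of $\frac1n(v\cdot M)_{\cdot\, n^2}$ to a Brownian motion with variance $v^T\Si^2 v$; the Cramér--Wold device upgrades this to the $\bbR^d$-valued statement. All of this is standard (as the statement says), so I would keep it brief and cite \cite{ABDH13, Bi11}.

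\medskip

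\textbf{Part 2: Non-degeneracy of $\Si^2$.}
Suppose for contradiction that there is a unit vector $v \in \bbR^d$ with $v^T\Si^2 v = \mean_0[g_v] = 0$. Then $\prob_0$-a.s.\ $v\cdot\Phi(\om,x)=0$ for every $x\sim 0$, and by the cocycle property and connectedness of $\cC_\infty(\om)$ this propagates to $v\cdot\Phi(\om,x)=0$ for all $x\in\cC_\infty(\om)$; equivalently
\begin{align*}
  v\cdot\chi(\om,x)
  \;=\;
  v\cdot x
  \qquad\text{for all } x\in\cC_\infty(\om),\ \prob_0\text{-a.e.\ }\om.
\end{align*}
This is the crux: the corrector, which is $\ell^1$-sublinear by Proposition~\ref{prop:sublinearity:1}, would have to coincide with the genuinely linear (hence non-sublinear) function $x\mapsto v\cdot x$ along the infinite cluster. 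To derive the contradiction I would average $v\cdot\chi(\om,x) = v\cdot x$ over $x\in B^{\om}(0,n)\cap\cC_\infty(\om)$: the right-hand side, thanks to volume regularity \eqref{eq:ass:balls} and the fact that the cluster has positive density, has $\|\,\cdot\,\|_{1,B^{\om}(0,n)}$-norm bounded below by a positive constant times $n$ (one shows $\frac1{|B^{\om}(0,n)|}\sum_{x}|v\cdot x|\ge c\,n$ for all large $n$, using that a uniformly positive fraction of points of $B^{\om}(0,n)$ lie at $d^{\om}$-distance, hence $\ell^1$-distance, of order $n$ from $0$ on one side of the hyperplane $\{v\cdot y=0\}$). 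On the other hand Proposition~\ref{prop:sublinearity:1} gives $\frac1n\|\chi(\om,\cdot)\|_{1,B^{\om}(0,n)}\to 0$, so $\frac1n\|v\cdot\chi(\om,\cdot)\|_{1,B^{\om}(0,n)}\to0$ as well. These two facts are incompatible, which is the desired contradiction. Hence $\Si^2$ is non-degenerate. (The hypothesis $\mean[(1/\om(e))\indicator_{e\in\cO}]<\infty$ together with Assumption~\ref{assumption:cluster} is exactly what licenses the use of Proposition~\ref{prop:sublinearity:1}; this is the same scheme as in \cite{PRS13, BB07}.)

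\medskip

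\textbf{Main obstacle.}
The analytic content of Part~1 (ergodic averaging of the quadratic variation, Lindeberg) is routine once one has $g_v\in L^1(\prob_0)$ and Lemma~\ref{lemma:ENVI}. The genuine work is in Part~2, and specifically in the quantitative lower bound $\frac1{|B^{\om}(0,n)|}\sum_{x\in B^{\om}(0,n)}|v\cdot x|\ge c\,n$: one must rule out the pathological possibility that the infinite cluster, while having volume $\gtrsim n^d$ inside $B^{\om}(0,n)$, is somehow squeezed near the hyperplane orthogonal to $v$. I would handle this by noting that $B^{\om}(0,n)\subset B(0,n)$, so the cluster points in $B^{\om}(0,n)$ live in an $\ell^1$-ball of radius $n$, and combining the volume lower bound $|B^{\om}(0,n)|\ge C_{\mathrm V}n^d$ with the trivial volume bound $|B(0,n)\cap\{|v\cdot y|\le \de n\}|\le C\,\de\,n^d$ for the slab; choosing $\de$ small enough that $C\de < C_{\mathrm V}$ forces at least $(C_{\mathrm V}-C\de)n^d$ cluster points to satisfy $|v\cdot x|>\de n$, and averaging gives the claim with $c = \de(C_{\mathrm V}-C\de)/C_{\mathrm V}$. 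With that in hand the contradiction with $\ell^1$-sublinearity closes the argument.
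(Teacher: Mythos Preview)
Your proposal is correct and follows essentially the same approach as the paper. Part~1 is handled identically (Helland's martingale FCLT together with the ergodicity of the environment process from Lemma~\ref{lemma:ENVI}); Part~2 likewise derives a contradiction from Proposition~\ref{prop:sublinearity:1}, the only cosmetic difference being that the paper obtains the lower bound on $\tfrac{1}{|B^{\om}(n)|}\sum_{x\in B^{\om}(n)}|v\cdot x|$ via the decomposition $\{|x|\le \de n\}\cup\{|v\cdot x/|x||\le \de\}$ rather than your single slab $\{|v\cdot x|\le \de n\}$, which is in fact slightly cleaner.
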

\begin{proof}
  The proof follows from the martingale convergence theorem by Helland, cf.\ \cite[Theorem~5.1a)]{He82}; see also \cite{ABDH13} or \cite{MP07} for details. The argument is based on the fact that the quadratic variation of $\{\tfrac{1}{n} M_{t n^2} : t \geq 0\}$ converges, for which the ergodicity of the environment process in Lemma~\ref{lemma:ENVI} is needed.

  It remain to show that the limiting Brownian motion is non-degenerate.  The argument is similar to the one in \cite{PRS13}, but avoids the use of the $\ell^{\infty}$-sublinearity of the corrector.  Assume that $(v, \Si^2 v) = 0$ for some $v \in \bbR^d$ with $|v| = 1$.  First, we deduce from Lemma~\ref{lemma:l2cov} that, for $\prob_0$-a.e.\ $\om$, $v \cdot \Phi(\om, x) = 0$ for all $x \in \cC_{\infty}(\om)$.  Since $x = \chi(\om, x) + \Phi(\om, x)$, this implies that, for $\prob_0$-a.e. $\om$, $|v \cdot x| = |v \cdot \chi(\om, x)|$ for all $x \in \cC_{\infty}(\om)$.  In particular,
  \begin{align}\label{eq:nondeg:1}
    \frac{1}{|B^{\om}(n)|}
    \sum_{x \in B^{\om}(n)}\mspace{-6mu}
    \big|v \cdot \tfrac{1}{n} x\big|
    \;=\;
    \frac{1}{|B^{\om}(n)|}
    \sum_{x \in B^{\om}(n)} \mspace{-6mu}
    \big|v \cdot \tfrac{1}{n} \chi(\om, x)\big|.
  \end{align}
  In view of Proposition~\ref{prop:sublinearity:1}, the right-hand side of \eqref{eq:nondeg:1} vanishes for $\prob_0$-a.e.\ $\om$ as $n$ tends to infinity.  On the other hand, for any $\de \in (0, 1)$ we have that
  \begin{align*}
    \frac{1}{n^{d}}\! \sum_{x \in B^{\om}(n)} \mspace{-6mu}
    \big|v \cdot \tfrac{1}{n} x\big|
    &\;\geq\;
    \frac{\de^2}{n^d}\! \sum_{\substack{x \in B^{\om}(n)\\x \ne 0}} \mspace{-6mu}
    \indicator_{|x| > \de n}\; \indicator_{|v \cdot x/|x|| > \de}
    \\[.5ex]
    &\;\geq\;
    \frac{\de^2}{n^d}
    \bigg(
      |B^{\om}(n)| \,-\, |B(\de n)|
      \,- \sum_{\substack{x \in B(n)\\ x \ne 0}} \mspace{-4mu}
      \indicator_{|v \cdot x/|x|| \leq \de}
    \bigg)
  \end{align*}
  Due to \eqref{eq:ass:balls}, $|B^{\om}(n)| \geq C_{\mathrm{V}} n^d$ for all $n \geq N_1(\om)$ and $\prob_0$-a.e.\ $\om$.  Moreover, the other two terms in the bracket above are of order $\de n^d$.  Hence, by choosing $\de$ sufficiently small, there exists $c > 0$ such that 
  \begin{align*}
    \liminf_{n \to \infty}\,
    \frac{1}{|B^{\om}(n)|}
    \sum_{x \in B^{\om}(n)}\mspace{-6mu}
    \big|v \cdot \tfrac{1}{n} x\big|
    \;\geq\;
    c
    \;>\;
    0.
  \end{align*}
  Thus, we proved that $(v, \Si^2 v) > 0$ for all $0 \ne v \in \bbR^d$, which completes the proof.
\end{proof}

\subsection{Sublinearity of the corrector}
Recall that we denote by $B^{\om}(x, r)$ and $B(x, r)$ a closed ball with center $x \in \cC_{\infty}(\om)$ and radius $r \geq 0$ with respect to the graph distance $d^{\om}$ and usual $\ell^1$-distance on $\bbZ^d$, respectively.  To lighten notation, we write $B^{\om}(r) \equiv B^{\om}(0, r)$ and $B(r) \equiv B(0, r)$.  Further, for any non-empty $A \subset \bbZ^d$, we define a locally space-averaged norm for functions $f\!: \bbZ^d \to \bbR$ by
\begin{align*}
  \Norm{f}{p, A}
  \;\ldef\;
  \bigg(
    \frac{1}{|A|}\, \sum_{x \in A} |f(x)|^p
  \bigg)^{\!\!1/p},
  \qquad p \in [1, \infty).
\end{align*}
Our main objective in this subsection is to prove the $\ell^{\infty}$-sublinearity of the corrector.
\begin{prop}[$\ell^{\infty}$-sublinearity]\label{prop:sublinearity:infty}
  Suppose that $\th \in (0, 1)$ and $p, q \in [1, \infty]$ satisfy Assumptions~\ref{assumption:cluster} and \ref{assumption:pq}.  Then, for any $j = 1, \ldots, d$,
  \begin{align}
    \lim_{n \to \infty}\, \max_{x \in B^{\om}(n)}\,
    \big|\tfrac{1}{n}\chi_j(\om, x)\big|
    \;=\;
    0,
    \qquad
    \prob_0\text{-a.s.}
  \end{align}
\end{prop}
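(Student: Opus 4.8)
The plan is to derive the $\ell^\infty$-sublinearity \eqref{eq:sublin_intro} from the $\ell^1$-sublinearity of the corrector (Proposition~\ref{prop:sublinearity:1}, to be proven later) via a maximal inequality obtained by Moser iteration. The starting point is the observation that each $\chi_j$, as an element of $L^2_{\mathrm{pot}}$, solves on $\cC_\infty(\om)$ the Poisson equation $\cL^\om \chi_j = \cL^\om \Pi_j = f_j$, where the right-hand side $f_j(x) = \sum_{y\sim x}\om(\{x,y\})(y_j - x_j)$ is a divergence-type expression that is controlled in $\ell^p$ by the conductances near $x$. Thus $u = \chi_j$ falls into the class of functions to which the maximal inequality of Section~\ref{sec:Sobolev} applies.

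First I would record that, under Assumption~\ref{assumption:cluster}, for $\prob_0$-a.e.\ $\om$ and all $n \geq N_0(\om)$ the ball $B^\om(0,n)$ is $\th$-very regular, so the anchored Sobolev inequality of Section~\ref{sec:Sobolev} holds on balls $B^\om(x,r)$ with $x \in B^\om(0,n)$ and $r \geq n^{\th/d}$; this is exactly the geometric input the Moser scheme needs. Next I would invoke the maximal inequality proven in Section~\ref{sec:Sobolev}: there exist exponents and a constant depending only on $d, p, q$ and the regularity constants such that, for a solution $u$ of $\cL^\om u = f$ on a regular ball,
\begin{align*}
  \max_{x \in B^\om(x_0, n/2)} |u(x)|
  \;\leq\;
  c\, \Big( \Norm{\nabla \om}{\dots}^{\kappa_1} + \Norm{\om^{-1}\indicator_{\cO}}{\dots}^{\kappa_2} \Big)
    \Big( \Norm{u}{1, B^\om(x_0,n)} + n\, \Norm{f}{\dots, B^\om(x_0,n)} \Big),
\end{align*}
where the prefactor is a polynomial in the local averages of $\om^p$ and $\om^{-q}\indicator_\cO$ over $B^\om(x_0,n)$. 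Applying this with $x_0 = 0$, $u = \chi_j$ and dividing by $n$ gives
\begin{align*}
  \max_{x \in B^\om(n/2)} \big|\tfrac1n \chi_j(\om,x)\big|
  \;\leq\;
  c\, \Theta_n(\om)\, \Big( \Norm{\tfrac1n \chi_j(\om,\cdot)}{1, B^\om(n)} + \Norm{\tfrac1n \,n\, f_j}{\dots, B^\om(n)} \Big),
\end{align*}
where $\Theta_n(\om)$ is the random prefactor above.

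The remaining task is to show the right-hand side tends to $0$. By Birkhoff's ergodic theorem applied to $\om(e)^p$ and $\om(e)^{-q}\indicator_{e\in\cO}$ along the sequence of balls $B^\om(n)$ — using volume regularity \eqref{eq:ass:balls} to compare averages over $B^\om(n)$ with averages over the Euclidean ball $B(cn)$, together with the extension of the ergodic theorem in Appendix~\ref{appendix:ergodic} — the prefactor $\Theta_n(\om)$ stays $\prob_0$-a.s.\ bounded (indeed converges) as $n\to\infty$; here Assumption~\ref{assumption:pq} guarantees the relevant moments are finite so the iteration exponents $\kappa_1,\kappa_2$ are admissible. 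The term $\Norm{f_j}{\dots,B^\om(n)}$ is, by the same ergodic argument, $\prob_0$-a.s.\ bounded, and after the $1/n$ rescaling in the maximal inequality it is multiplied by a factor that makes it $O(1)$ — so it must be absorbed or shown to be lower-order; more precisely one treats $f_j$ as the divergence of the cocycle $\Pi_j$ itself and uses the cocycle/sublinearity structure so that the "$n\,f_j$" contribution is itself of the form handled by $\ell^1$-sublinearity. Finally, Proposition~\ref{prop:sublinearity:1} forces $\Norm{\tfrac1n \chi_j(\om,\cdot)}{1,B^\om(n)} \to 0$ $\prob_0$-a.s., and since a bounded factor times a null sequence is null, the left-hand side over $B^\om(n/2)$ vanishes; replacing $n$ by $2n$ (and using $N_0(\om)<\infty$, volume regularity to handle the annulus $B^\om(n)\setminus B^\om(n/2)$ by a standard covering/chaining over the finitely many relevant scales) upgrades this to the stated maximum over $B^\om(n)$.

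The main obstacle I expect is bookkeeping the interplay between the graph distance balls $B^\om(x,r)$ and the Euclidean balls $B(x,r)$ when applying the ergodic theorem: the Moser iteration produces averages over $d^\om$-balls, whereas Birkhoff's theorem naturally controls averages over Euclidean boxes, and bridging the two requires the volume comparison \eqref{eq:ass:balls} and the appendix's generalized ergodic theorem, done uniformly enough that the random prefactor $\Theta_n(\om)$ remains bounded along the whole sequence rather than merely along a subsequence. A secondary subtlety is verifying that the admissibility condition on the Moser exponents is implied precisely by the inequality $\tfrac1p + \tfrac1q < \tfrac{2(1-\th)}{d-\th}$ in Assumption~\ref{assumption:pq} — i.e.\ checking that the $\th$-dependent loss in the Sobolev inequality (coming from the threshold scale $n^{\th/d}$ below which regularity may fail) is exactly compensated by this strengthened moment condition.
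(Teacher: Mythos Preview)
Your overall strategy is correct and matches the paper's: derive a maximal inequality via Moser iteration (Proposition~\ref{prop:maximal:inequality}), control the random prefactor by the spatial ergodic theorem (Lemma~\ref{lemma:ergodic:weights}), and then invoke the $\ell^1$-sublinearity (Proposition~\ref{prop:sublinearity:1}). Your identification of the two technical issues --- the comparison of $d^{\om}$-balls with Euclidean balls in the ergodic argument, and the role of the $\th$-dependent moment condition --- is also accurate; the paper handles the first via the trivial inclusion $B^{\om}(n) \subset B(n) \cap \cC_{\infty}(\om)$ together with volume regularity, and the second by replacing $d$ with the effective dimension $d' = (d-\th)/(1-\th)$, which turns $\tfrac{1}{p}+\tfrac{1}{q} < \tfrac{2}{d'}$ into exactly Assumption~\ref{assumption:pq}.

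The one place where your write-up is imprecise is the form of the maximal inequality and the treatment of the source term. You state it with an additive contribution $n\,\Norm{f_j}{\dots}$ and then acknowledge this is $O(1)$ rather than $o(1)$, resorting to the vague claim that it ``must be absorbed or shown to be lower-order''. In the paper's formulation this problem does not arise: the cited result \cite[Corollary~3.9]{ADS15} is tailored to the equation $\cL^{\om} u = \cL^{\om} f$ with $f(x) = \tfrac{1}{n} x_j$, and because $|\nabla f| \equiv \tfrac{1}{n}$ is constant, the Caccioppoli step absorbs the $f$-gradient into the iteration constants rather than producing a separate additive term. The resulting clean inequality reads
\[
  \max_{x \in B^{\om}(n)} \big|\tfrac{1}{n}\chi_j(\om,x)\big|
  \;\leq\;
  c\,\Big(1 \vee \Norm{\mu^{\om}}{p,B^{\om}(n)}\Norm{\nu^{\om}}{q,B^{\om}(n)}\Big)^{\!\ka'}
  \Norm{\tfrac{1}{n}\chi_j(\om,\cdot)}{\al,B^{\om}(2n)}^{\ga'},
\]
with no residual $f$-term to dispose of. So your argument is not wrong, but to make it go through you would need to revisit the Moser iteration itself (rather than treat the source term a posteriori) and verify that the specific divergence structure $\cL^{\om}\Pi_j$ yields the clean bound above.
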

The proof is based on both ergodic theory and purely analytic tools.  In a first step, we show the $\ell^1$-sublinearity of the corrector, that is the convergence of $\frac{1}{n} \chi$ to zero in the $\Norm{\,\cdot\,}{1, B^{\om}(n)}$-norm.  This proof uses the spatial ergodic theorem as well as the anchored $S_1$-Sobolev inequality that we established in Proposition~\ref{prop:Sobolev1:weighted}.  In a second step, we use the maximum inequality in order to bound from above the maximum of $\frac{1}{n} \chi$ in $B^{\om}(n)$ by $\Norm{\frac{1}{n} \chi}{1, B^{\om}(n)}$.

Let us start with some consequences from the ergodic theorem.  To simplify notation let us define the following measures $\mu^{\om}$ and $\nu^{\om}$ on $\bbZ^d$
\begin{align*}
  \mu^{\om}(x)
  \;=\;
  \sum_{x \sim y} \om(\{x,y\})
  \qquad \text{and} \qquad
  \nu^{\om}(x)
  \;=\;
  \sum_{x \sim y} \frac{1}{\om(\{x, y\})}\, \indicator_{\{x,y\} \in \cO(\om)},
\end{align*}
where we still use the convention that $0/0 = 0$.
\begin{lemma}\label{lemma:ergodic:weights}
  Suppose that for $\prob_0$-a.e.\ $\om$ there exists $N_1(\om) < \infty$ such that the ball $B^{\om}(n)$ satisfies the volume regularity \eqref{eq:ass:balls} for all $n \geq N_1(\om)$.  Further, assume that $\mean[\om(e)^p] < \infty$ and $\mean[(1/\om(e))^q \indicator_{e \in \cO}] < \infty$ for some $p, q \in [1, \infty)$.  Then, for $\prob_0$-a.s.\ $\om$ there exists $c < \infty$ such that
  \begin{align}
    \limsup_{n \to \infty} \Norm{\mu^{\om}}{p, B^{\om}(n)}^p
    \;\leq\;
    c\, \mean_0[\mu^{\om}(0)^p]
    \quad \text{and} \quad
    \limsup_{n \to \infty} \Norm{\nu^{\om}}{q, B^{\om}(n)}^q
    \;\leq\;
    c\, \mean_0[\nu^{\om}(0)^q].
  \end{align}
\end{lemma}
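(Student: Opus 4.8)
The plan is to combine the spatial ergodic theorem with the volume regularity assumption, following a now-standard argument (as in \cite{ADS15}). Fix $p \in [1, \infty)$ and consider the local function $\om \mapsto \mu^{\om}(0)^p$, which is in $L^1(\prob_0)$ by hypothesis. By the ergodicity of $\prob$ under translations (Assumption~\ref{assumption:law}), together with the fact that $\cC_{\infty}(\om)$ is $\prob$-a.s.\ unique, one has a spatial ergodic theorem along the infinite cluster: for $\prob_0$-a.e.\ $\om$,
\begin{align*}
  \lim_{r \to \infty} \frac{1}{|B(r)|} \sum_{x \in B(r) \cap \cC_{\infty}(\om)} \mu^{\om}(x)^p
  \;=\;
  \prob[0 \in \cC_{\infty}]\, \mean_0\!\big[\mu^{\om}(0)^p\big].
\end{align*}
This is the version of Birkhoff's theorem referenced in Appendix~\ref{appendix:ergodic}; note that $\mu^{\om}(x)^p = (\mu^{\tau_x \om}(0))^p$ by the shift covariance of $\mu$, so the sum is genuinely an ergodic average of the function $\om \mapsto \mu^{\om}(0)^p \indicator_{0 \in \cC_{\infty}}$ evaluated along the orbit.

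Next I would transfer this from Euclidean balls $B(r)$ to graph balls $B^{\om}(n)$. The key point is the trivial inclusion $B^{\om}(n) \subset B(n)$ (a path of length $\leq n$ in $\cO(\om)$ moves $\ell^1$-distance $\leq n$), so
\begin{align*}
  \sum_{x \in B^{\om}(n)} \mu^{\om}(x)^p
  \;\leq\;
  \sum_{x \in B(n) \cap \cC_{\infty}(\om)} \mu^{\om}(x)^p.
\end{align*}
Dividing by $|B^{\om}(n)|$ and using the volume regularity $|B^{\om}(n)| \geq C_{\mathrm{V}} n^d$ valid for $n \geq N_1(\om)$, together with $|B(n)| \leq c\, n^d$, gives
\begin{align*}
  \Norm{\mu^{\om}}{p, B^{\om}(n)}^p
  \;=\;
  \frac{1}{|B^{\om}(n)|} \sum_{x \in B^{\om}(n)} \mu^{\om}(x)^p
  \;\leq\;
  \frac{1}{C_{\mathrm{V}} n^d} \sum_{x \in B(n) \cap \cC_{\infty}(\om)} \mu^{\om}(x)^p
  \;=\;
  \frac{|B(n)|}{C_{\mathrm{V}} n^d} \cdot \frac{1}{|B(n)|} \sum_{x \in B(n) \cap \cC_{\infty}(\om)} \mu^{\om}(x)^p.
\end{align*}
Taking $\limsup_{n \to \infty}$ and applying the ergodic limit above, the right-hand side converges to $\big(|B(n)|/(C_{\mathrm{V}} n^d)\big) \cdot \prob[0 \in \cC_{\infty}]\, \mean_0[\mu^{\om}(0)^p] \leq c\, \mean_0[\mu^{\om}(0)^p]$ with $c = c_d / C_{\mathrm{V}}$ (absorbing $\prob[0 \in \cC_{\infty}] \le 1$). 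The identical argument applied to the local function $\om \mapsto \nu^{\om}(0)^q \in L^1(\prob_0)$ yields the second inequality.

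There is essentially no hard obstacle here; the statement is a routine consequence of the ergodic theorem once one has the right-hand-side control on volumes. The only two things requiring a word of care are: (i) justifying the spatial ergodic theorem in the form needed — averaging a function over $B(r) \cap \cC_{\infty}(\om)$ rather than over all of $B(r)$ — which is exactly what the extension of Birkhoff's theorem in the Appendix provides, and in any case follows by applying the standard multidimensional ergodic theorem to $\om \mapsto \mu^{\om}(0)^p\, \indicator_{0 \in \cC_{\infty}}$ and to $\om \mapsto \indicator_{0 \in \cC_{\infty}}$ separately and taking the ratio; and (ii) keeping track of the constant, but since the conclusion only asserts existence of some finite $c$ independent of $\om$, this is immaterial. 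I would write the proof for $\mu^{\om}$ in full and remark that the bound for $\nu^{\om}$ is obtained verbatim with $\mu$ replaced by $\nu$ and $p$ by $q$.
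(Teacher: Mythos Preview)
Your proposal is correct and follows essentially the same argument as the paper: use the inclusion $B^{\om}(n)\subset B(n)\cap\cC_\infty(\om)$ together with the volume lower bound $|B^{\om}(n)|\geq C_{\mathrm V}n^d$ to dominate the space-averaged norm by an ergodic average of $\mu^{\tau_x\om}(0)^p\,\indicator_{0\in\cC_\infty(\tau_x\om)}$ over the Euclidean ball, and then apply the spatial ergodic theorem. The paper's proof is just a one-line version of what you wrote.
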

\begin{proof}
  The assertions follows immediately from the spatial ergodic theorem.  For instance, we have for $\prob_0$-a.s.
  \begin{align*}
    \limsup_{n \to \infty} \Norm{\mu^{\om}}{p, B^{\om}(n)}^p
    \overset{\eqref{eq:ass:balls}}{\;\leq\;}
    \limsup_{n \to \infty}
    \frac{C_{\mathrm{V}}^{-1}}{n^d}\,
    \sum_{x \in B(n)} \mspace{-3mu}\mu^{\tau_{x} \om}(0)^p\,
    \indicator_{0 \in \cC(\tau_x \om)}
    \;\leq\;
    c\, \mean_0\!\big[\mu^{\om}(0)^p\big],
  \end{align*}
  where we exploit the observation that $B^{\om}(n) \subset B(n) \cap \cC_{\infty}(\om)$ for every $n \geq 1$.
\end{proof}
The next lemma relies on an extension of Birkhoff's ergodic theorem that we show in the appendix.
\begin{lemma}\label{lemma:weighted:grad}
  Let $w_n\!: E_d \to (0, \infty)$ be defined by $w_n(\{x,y\}) = (n/\max\{|x|_1, |y|_1\})^{d-\ve}$ for some $\ve \in (0, 1)$, and assume that $\mean[(1/\om(e)) \indicator_{e \in \cO}] < \infty$ for all $e \in E_d$.  Then, there exists $C_5 < \infty$ such that for any $\Psi \in L_{\mathrm{cov}}^2$ and $\prob_0$-a.e.\ $\om$,
  \begin{align}\label{eq:est:weighted:grad}
    \limsup_{n \to \infty} \frac{1}{n^d}\!
    \sum_{\substack{x,y \in B^{\om}(n)\\x \sim y}}\mspace{-6mu} w_n(\{x,y\})\,
    |\Psi(\om, x) - \Psi(\om, y)|
    \;\leq\;
    \frac{C_5}{\ve}\, \mean_0[\nu^{\om}(0)]^{1/2}\,
    \Norm{\Psi}{L_{\mathrm{cov}}^2}.
  \end{align}
\end{lemma}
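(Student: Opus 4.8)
The plan is to reduce the weighted sum over edges in $B^{\om}(n)$ to a Birkhoff-type average of a single local function evaluated along the orbit $\{\tau_x\om\}$, and then apply the extension of the ergodic theorem from the appendix together with Cauchy--Schwarz. First I would split $B^{\om}(n)$ into dyadic-type annuli $R_k = B^{\om}(n) \cap (B(2^{-k}n) \setminus B(2^{-k-1}n))$ for $k = 0, 1, \ldots, \lfloor \log_2 n\rfloor$, plus the innermost core. On each edge $\{x,y\}$ with $\max\{|x|_1,|y|_1\} \in [2^{-k-1}n, 2^{-k}n)$ the weight $w_n(\{x,y\})$ is comparable to $2^{k(d-\ve)}$, so the contribution of $R_k$ to the left-hand side is bounded, up to a constant, by
\begin{align*}
  \frac{2^{k(d-\ve)}}{n^d}
  \sum_{\substack{x,y \in R_k \\ x \sim y}}
  |\Psi(\om,x) - \Psi(\om,y)|.
\end{align*}
The number of vertices in an annulus of radius $\sim 2^{-k}n$ is $O((2^{-k}n)^d)$ in $\bbZ^d$, so writing the inner sum as $\sum_{x \in B(2^{-k}n)} g(\tau_x\om)$ with $g(\om) = \sum_{z\sim 0}|\Psi(\om,z)|\,\indicator_{0\in\cC_\infty}$ (using the cocycle property $\Psi(\om,y)-\Psi(\om,x) = \Psi(\tau_x\om,y-x)$ and Lemma~\ref{lemma:l2cov}(i)), the annulus term becomes
\begin{align*}
  c\, 2^{k(d-\ve)} \cdot 2^{-kd} \cdot \frac{1}{(2^{-k}n)^d}
  \sum_{x \in B(2^{-k}n)} g(\tau_x\om)
  \;=\;
  c\, 2^{-k\ve}\, \frac{1}{|B(2^{-k}n)|} \sum_{x \in B(2^{-k}n)} g(\tau_x\om).
\end{align*}

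Next I would invoke the ergodic theorem. The function $g$ need not be in $L^1(\prob_0)$ a priori only under a first-moment bound on $|\Psi|$; this is exactly why the hypothesis $\mean[(1/\om(e))\indicator_{e\in\cO}]<\infty$ enters, via the elementary bound $|\Psi(\om,z)| \le \tfrac12(\om(\{0,z\})|\Psi(\om,z)|^2 + \om(\{0,z\})^{-1}\indicator_{\{0,z\}\in\cO})$, which after summing over $z\sim 0$ and taking $\mean_0$ gives $\mean_0[g] \le \tfrac12(\Norm{\Psi}{L^2_{\mathrm{cov}}}^2 + \mean_0[\nu^{\om}(0)]) < \infty$. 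So Birkhoff's theorem applies on each fixed scale; but since we sum over $\sim\log n$ scales and the radii $2^{-k}n$ shrink, a single application of Birkhoff at scale $n$ is not enough — this is where the \emph{extension} of Birkhoff's ergodic theorem announced for the appendix is needed, giving a uniform-in-scale statement of the form $\limsup_n \max_{1\le r\le n}\frac{1}{|B(r)|}\sum_{x\in B(r)}g(\tau_x\om) \le c\,\mean_0[g]$ along a suitable class of balls (this is the standard device for handling the innermost annuli, which are the genuinely delicate ones). Granting that, each annulus term is $\le c\, 2^{-k\ve}\,\mean_0[g]$ in the limsup, and summing the geometric series $\sum_{k\ge 0} 2^{-k\ve} \le c/\ve$ produces the factor $C_5/\ve$. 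Finally, substituting the bound $\mean_0[g] \le \tfrac12(\Norm{\Psi}{L^2_{\mathrm{cov}}}^2 + \mean_0[\nu^{\om}(0)])$ is not quite the clean form stated; instead one applies Cauchy--Schwarz directly at the level of the inner sum, writing
\begin{align*}
  \sum_{x\sim 0}|\Psi(\om,x)|
  \;=\;
  \sum_{x\sim 0} \om(\{0,x\})^{1/2}|\Psi(\om,x)| \cdot \om(\{0,x\})^{-1/2}\indicator_{\{0,x\}\in\cO}
  \;\le\;
  \Big(\sum_{x\sim 0}\om(\{0,x\})|\Psi(\om,x)|^2\Big)^{1/2} \nu^{\om}(0)^{1/2},
\end{align*}
and then applying the ergodic theorem to the product after one more Cauchy--Schwarz over $x\in B(r)$, which yields precisely $\mean_0[\nu^{\om}(0)]^{1/2}\,\Norm{\Psi}{L^2_{\mathrm{cov}}}$.

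The main obstacle is the passage across all dyadic scales simultaneously: a naive scale-by-scale application of Birkhoff gives, for each fixed $k$, a null set, and one cannot take a countable union over $k$ and still control the innermost annuli (where $2^{-k}n$ is bounded and no averaging effect is available). The resolution is the maximal/uniform ergodic theorem proved in the appendix, which must be invoked in a form strong enough to bound $\sup_{r\le n}$ of the averages; verifying that $g \in L^1(\prob_0)$ (hence that this extension applies) is exactly where the integrability assumption $\mean[(1/\om(e))\indicator_{e\in\cO}] < \infty$ is used, and keeping the constant's dependence on $\ve$ linear in $1/\ve$ requires doing the geometric summation carefully rather than bounding each weight by its maximum $n^{d-\ve}$.
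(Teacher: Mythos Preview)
Your dyadic strategy is in the right spirit and can be made to work, but it differs from the paper's route and contains a technical misstep.

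The misstep: the appendix result is \emph{not} a maximal ergodic theorem of the form you write. The inequality $\limsup_n \max_{1\le r\le n}\frac{1}{|B(r)|}\sum_{x\in B(r)}g(\tau_x\om) \le c\,\mean_0[g]$ is false in general; the Wiener maximal function $g^*(\om) = \sup_{r\ge 1}\frac{1}{|B(r)|}\sum_{x\in B(r)}|g(\tau_x\om)|$ is only in weak $L^1$, not pointwise bounded by $c\,\mean_0[g]$. Your argument can be repaired by splitting the sum over $k$ at a fixed cutoff $K$, applying ordinary Birkhoff on each scale $k\le K$ (where $2^{-k}n\to\infty$) and using only the a.s.\ finiteness of $g^*$ for $k>K$, then letting $K\to\infty$ so the tail $\sum_{k>K}2^{-k\ve}g^*\to 0$. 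This is essentially what the appendix does internally, via Abel summation rather than dyadic blocks.

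The paper's argument is more direct and avoids the decomposition altogether. After the cocycle substitution and the inclusion $B^{\om}(n)\subset B(n)\cap\cC_\infty(\om)$, the left-hand side is bounded by
\begin{align*}
  \frac{\psi(\om)}{n^{\ve}}
  \;+\;
  \frac{1}{n^{d}}\sum_{x\in B(n)\setminus\{0\}}\frac{\psi(\tau_x\om)}{|x/n|^{d-\ve}},
\end{align*}
where $\psi(\om)=\sum_{y\sim 0}|\Psi(\om,y)|\,\indicator_{0\in\cC_\infty}\indicator_{\{0,y\}\in\cO}$. The appendix theorem is exactly the statement that this weighted average converges $\prob$-a.s.\ to $\big(\int_{B_1}|y|^{-(d-\ve)}\,\md y\big)\,\mean[\psi]$; the integral is of order $1/\ve$, which is where the constant comes from. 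Finally, Cauchy--Schwarz is applied \emph{once}, at the level of $\mean_0$, to get $\mean_0[\psi]\le \mean_0[\nu^{\om}(0)]^{1/2}\Norm{\Psi}{L^2_{\mathrm{cov}}}$; no second Cauchy--Schwarz over $x\in B(r)$ is needed. Your dyadic route would ultimately reprove the appendix theorem in the course of the lemma, whereas the paper isolates it as a standalone weighted ergodic statement and invokes it in one line.
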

\begin{proof}
  First, an application of the Cauchy-Schwarz inequality yields
  \begin{align}\label{eq:cocyle:CS}
    &\mean_0\!\Big[
      {\textstyle \sum_{0 \sim y}} |\Psi(\om, y)|\, 
      \indicator_{\{0, y\} \in \cO}
    \Big]
    \nonumber\\[.5ex]
    &\mspace{36mu}\leq\;
    \mean_0\!\Big[
      {\textstyle \sum_{0 \sim y}} \big(1/\om(\{0,y\})\big)\,
      \indicator_{\{0, y\} \in \cO}
    \Big]^{1/2}\,
    \mean_0\!\Big[
      {\textstyle \sum_{0 \sim y}} \om(\{0,y\})\, |\Psi(\om, y)|^2
    \Big]^{1/2}
    \nonumber\\[.5ex]
    &\mspace{36mu}=\;
    \mean_0[\nu^{\om}(0)]^{1/2}\, \Norm{\Psi}{L_{\mathrm{cov}}^2}.
  \end{align}
  which is finite since $\Psi \in L_{\mathrm{cov}}^2$ and $\mean[(1/\om(e)) \indicator_{e \in \cO}] < \infty$ by assumption.  Recall that $\Psi$ satisfies the cocycle property, that is $\Psi(\om, x) - \Psi(\om, y) = \Psi(\tau_{x} \om, y-x)$ for $\prob_0$-a.e.\ $\om$ and for every $x, y \in \cC_{\infty}(\om)$.  Since $B^{\om}(n) \subset B(n) \cap \cC_{\infty}(\om)$ for every $n \geq 1$, we obtain that, for any $\om \in \Om_0$,
  \begin{align*}
    &\frac{1}{n^d}\!
    \sum_{\substack{x,y \in B^{\om}(n)\\x \sim y}}\mspace{-6mu} w_n(\{x,y\})\,
    \big|\Psi(\tau_x\om, y - x)\big|
    \nonumber\\[.5ex]
    &\mspace{36mu}\leq\;
    \frac{1}{n^d}\!
    \sum_{x,y \in B(n)} \mspace{-6mu} w_n(\{x,y\})\,
    \big|\Psi(\tau_x \om, y-x)\big|\,
    \indicator_{0 \in \cC_{\infty}(\tau_x \om)}
    \indicator_{\{0, y-x\} \in \cO(\tau_x \om)}
    \nonumber\\[.5ex]
    &\mspace{36mu}\leq\;
    \frac{\psi(\om)}{n^{\ve}}
    \,+\,
    \frac{1}{n^{d}}
    \sum_{\substack{x \in B(n)\\ x \ne 0}} \frac{\psi(\tau_x \om)}{|x/n|^{d-\ve}}
  \end{align*}
  where we introduced $\psi(\om) = \sum_{y \sim 0}|\Psi(\om, y)| \indicator_{0 \in \cC_{\infty}(\om)} \indicator_{\{0, y\} \in \cO(\om)}$ to lighten notation.  Further, an application of \eqref{eq:ergodic:conv} yields 
  \begin{align*}
    &\limsup_{n \to \infty}
    \frac{1}{n^d}\!
    \sum_{\substack{x,y \in B^{\om}(n)\\x \sim y}}\mspace{-6mu} w_n(\{x,y\})\,
    \big|\Psi(\tau_x\om, y - x)\big|
    \\[.5ex]
    &\mspace{36mu}\leq\;
    \frac{C_5}{\ve}\,
    \mean\!\Big[
      {\textstyle \sum_{0 \sim y}} |\Psi(\om, y)|\, \indicator_{0 \in \cC_{\infty}}
      \indicator_{\{0, y\} \in \cO}
    \Big]
    \overset{\eqref{eq:cocyle:CS}}{\;\leq\;}
    \frac{C_5}{\ve}\, \mean_0[\nu^{\om}(0)]^{1/2}\, \Norm{\Psi}{L_{\mathrm{cov}}^2},
  \end{align*}
  which concludes the proof.
\end{proof}
\begin{prop}[$\ell^{1}$-sublinearity]\label{prop:sublinearity:1}
  Suppose $\th \in (0, 1)$ satisfies Assumption~\ref{assumption:cluster}, and assume that $\mean\big[(1/\om(e)) \indicator_{e \in \cO}\big] < \infty$ for all $e \in E_d$.  Then, for any $j = 1, \ldots, d$,
  \begin{align}\label{eq:sublinearity:1}
    \lim_{n \to \infty}\, \frac{1}{|B^{\om}(n)|}
    \sum_{x \in B^{\om}(n)}\mspace{-6mu} \big| \tfrac{1}{n}\, \chi_j(\om, x)\big|
    \;=\;
    0,
    \qquad \prob\text{-a.s.}
  \end{align}
\end{prop}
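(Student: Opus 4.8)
The strategy is the classical one of De Masi--Ferrari--Goldstein--Wick adapted to the random-graph setting, exploiting the orthogonal decomposition $L^2_{\mathrm{cov}} = L^2_{\mathrm{pot}} \oplus L^2_{\mathrm{sol}}$. Since $\chi_j \in L^2_{\mathrm{pot}}$, for every $\varepsilon > 0$ I pick a bounded local function $\varphi = \varphi_{\varepsilon}\colon \Om \to \bbR$ such that $\Norm{\chi_j - \mD\varphi}{L^2_{\mathrm{cov}}} < \varepsilon$. Write $\Psi_{\varepsilon} \ldef \chi_j - \mD\varphi \in L^2_{\mathrm{cov}}$, so that by the cocycle property $\chi_j(\om,x) = \varphi(\tau_x\om) - \varphi(\om) + \Psi_{\varepsilon}(\om,x)$ for $x \in \cC_{\infty}(\om)$. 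The contribution of the gradient part is handled by the ergodic theorem: $\frac{1}{n}(\varphi(\tau_x\om) - \varphi(\om))$ is uniformly $O(1/n)$ over $x \in B(n)$ since $\varphi$ is bounded, hence its $\Norm{\,\cdot\,}{1,B^{\om}(n)}$-norm tends to $0$ deterministically. It therefore remains to control $\frac{1}{n|B^{\om}(n)|}\sum_{x \in B^{\om}(n)} |\Psi_{\varepsilon}(\om,x)|$.

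For the cocycle part I would combine the anchored $S_1$-Sobolev (or relative isoperimetric) inequality with the weighted ergodic estimate of Lemma~\ref{lemma:weighted:grad}. The point of the anchored Sobolev/isoperimetric inequality (Proposition~\ref{prop:Sobolev1:weighted}) is precisely that it bounds $\Norm{u}{1,B^{\om}(n)}$, for a function $u$ vanishing suitably, by a weighted $\ell^1$-norm of its gradient $\sum_{x \sim y} w_n(\{x,y\})|u(x)-u(y)|$ with the weights $w_n(\{x,y\}) = (n/\max\{|x|_1,|y|_1\})^{d-\varepsilon'}$ appearing in Lemma~\ref{lemma:weighted:grad}. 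Applying this inequality to $\Psi_{\varepsilon}$ on the ball $B^{\om}(n)$ (subtracting its average or value at $0$ as required, using $\Psi_{\varepsilon}(\om,0)=0$ from Lemma~\ref{lemma:l2cov}), and then invoking Lemma~\ref{lemma:weighted:grad}, I get
\begin{align*}
  \limsup_{n \to \infty}\, \frac{1}{n}\,\Norm{\Psi_{\varepsilon}}{1, B^{\om}(n)}
  \;\leq\;
  c\, \mean_0[\nu^{\om}(0)]^{1/2}\, \Norm{\Psi_{\varepsilon}}{L^2_{\mathrm{cov}}}
  \;\leq\;
  c\, \mean_0[\nu^{\om}(0)]^{1/2}\, \varepsilon,
\end{align*}
for $\prob_0$-a.e.\ $\om$, where the finiteness of $\mean_0[\nu^{\om}(0)]$ is exactly the hypothesis $\mean[(1/\om(e))\indicator_{e\in\cO}]<\infty$. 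Since $\varepsilon > 0$ is arbitrary and the deterministic gradient part vanishes, a diagonal argument over a sequence $\varepsilon = \varepsilon_k \downarrow 0$ along a full-measure set of $\om$ yields $\frac{1}{n}\Norm{\chi_j}{1,B^{\om}(n)} \to 0$ $\prob_0$-a.s.

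The statement is phrased with $\prob$-a.s.\ rather than $\prob_0$-a.s., but since $\{0 \in \cC_{\infty}\}$ has positive probability and $\prob_0 = \prob[\,\cdot\mid 0\in\cC_\infty]$, a $\prob_0$-a.s.\ statement about $\om$ restricted to $\Om_0$ (which is all that \eqref{eq:sublinearity:1} concerns, as $B^{\om}(n)$ is only defined for $0\in\cC_\infty(\om)$) is the same as a $\prob$-a.s.\ statement; this is a harmless bookkeeping point. The main obstacle is the correct application of the anchored Sobolev inequality from Section~\ref{sec:Sobolev}: one must match the weight function $w_n$ and the geometry (balls $B^{\om}(n)$ versus the sets $\cS^{\om}$ from Definition~\ref{def:regular}, the enlargement factor $C_{\mathrm{W}}$, and the role of $\th$) so that the hypotheses of Proposition~\ref{prop:Sobolev1:weighted} are met, and one must absorb the fact that $\Psi_\varepsilon$ need not have compact support by a cutoff/extension argument combined with the volume regularity \eqref{eq:ass:balls}. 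The ergodic-theory input (Lemma~\ref{lemma:weighted:grad}) and the $L^2_{\mathrm{pot}}$-approximation are then routine; it is the interface between the analytic inequality and the cocycle that requires care.
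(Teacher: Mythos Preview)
Your proposal is correct and follows exactly the paper's approach: approximate $\chi_j$ in $L^2_{\mathrm{cov}}$ by a gradient $\mD\varphi$, bound the gradient contribution trivially by $\|\varphi\|_{\infty}/n$, and control the remainder $\Psi_\varepsilon$ via the anchored Sobolev inequality (Proposition~\ref{prop:Sobolev1:weighted}) combined with Lemma~\ref{lemma:weighted:grad}. One minor simplification: your concern about compact support is unnecessary, since Proposition~\ref{prop:Sobolev1:weighted} requires only that $u$ vanish at the anchor point (which holds for $\Psi_\varepsilon$ by Lemma~\ref{lemma:l2cov}(i)), not that it be compactly supported --- so no cutoff/extension argument is needed.
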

\begin{proof}
  Since $\chi_j \in L_{\mathrm{pot}}^2$, there exists a sequence of bounded functions $\vp_{j,k}\!:\Om \to \bbR$ such that $\mathrm{D}\vp_{j,k} \to \chi_j$ in $L_{\mathrm{cov}}^{2}$ as $k \to \infty$.  Thus, for any fixed $k \geq 1$ we obtain
  \begin{align}\label{eq:sublinear:1:est1}
    \frac{1}{n^{d+1}}\!\sum_{x \in B^{\om}(n)}\mspace{-4mu} |\chi_j(\om, x)|
    \;\leq\;
    \frac{c \Norm{\vp_{j,k}}{L^{\!\infty}(\Om)}}{n}
    \,+\,
    \frac{1}{n^{d+1}}\!\sum_{x \in B^{\om}(n)}\mspace{-4mu}
    \big|(\chi_j - \mathrm{D} \vp_{j,k})(\om, x)\big|.
  \end{align}
  In order to bound from above the second term on the right hand-side of \eqref{eq:sublinear:1:est1} we consider the deterministic edge weight $w_n\!: E_d \to (0, \infty)$ that is defined by $w_n(\{x,y\}) = (n/\max\{|x|_1, |y|_1\})^{d-\ve}$ for some $\ve \in (0, 1)$.  Since $d^{\om}(x,y) \geq |x-y|_1$ for any $x,y \in \cC_{\infty}(\om)$, the $w_n$ satisfies the assumption in Proposition~\ref{prop:Sobolev1:weighted}.  By applying \eqref{eq:Sobolev1:weighted} and the cocycle property, we find for any $\om \in \Om_0$ that
  \begin{align*}
    &\frac{1}{n^{d+1}}\!
    \sum_{x \in B^{\om}(n)}\mspace{-6mu}
    \big|(\chi_j - \mathrm{D} \vp_{j,k})(\om,x)\big|
    \nonumber\\[.5ex]
    &\mspace{36mu}\leq\;
    \frac{\overbar{C}_{\mathrm{S}_1}}{n^d}\!
    \sum_{x, y \in B^{\om}(C_{\mathrm{W}} n)}\mspace{-18mu}
    w_n(\{x,y\})\,
    \big|(\chi_j - \mathrm{D}\vp_{j,k})(\tau_x \om, y-x)\big|\,
    \indicator_{\{0,y-x\} \in \cO(\tau_x \om)}.
  \end{align*}
  Hence, by combining the estimate above with \eqref{eq:sublinear:1:est1}, we get
  \begin{align}
    &\frac{1}{n^{d+1}}\!\sum_{x \in B^{\om}(n)}\mspace{-4mu} |\chi(\om, x)|
    \nonumber\\[.5ex]
    &\mspace{36mu}\leq\;
    \frac{c \Norm{\vp_{j,k}}{L^{\!\infty}(\Om)}}{n}
    \,+\,
    \frac{\overbar{C}_{\mathrm{S}_1}}{n^d}\mspace{-6mu}
    \sum_{\substack{x, y \in B^{\om}(C_{\mathrm{W}} n)\\\{0,y-x\} \in \cO(\tau_x \om)}}%
    \mspace{-18mu}
    w_n(\{x,y\})\,
    \big|(\chi_j - \mathrm{D}\vp_{j,k})(\tau_x \om, y-x)\big|.
  \end{align}
  In view of Lemma~\ref{lemma:weighted:grad}, we obtain that there exists $c < \infty$ such that for $\prob_0$-a.e.\ $\om$,
  \begin{align*}
    \lim_{k \to \infty}
    \lim_{n \to \infty} \Norm{\tfrac{1}{n}\, \chi_j(\om, \cdot)}{1, B^{\om}(n)}
    &\overset{\eqref{eq:ass:balls}}{\;\leq\;}
    \lim_{k \to \infty}
    \lim_{n \to \infty}\,
    \frac{C_{\mathrm{V}}^{-1}}{n^{d+1}}\!
    \sum_{x \in B^{\om}(n)}\mspace{-4mu} |\chi(\om, x)|
    \\[.5ex]
    &\overset{\eqref{eq:est:weighted:grad}}{\;\leq\;}
    \lim_{k \to \infty}
    \frac{c}{\ve}\, \mean_0[\nu^{\om}(0)]^{1/2}\,
    \Norm{\chi_j - \mathrm{D}\vp_{j,k}}{L_{\mathrm{cov}}^2}
    \;=\;
    0,
  \end{align*}
  which concludes the proof.
\end{proof}
In the following lemma we show that under the assumption that the ball $B^{\om}(n)$ is $\th$-very regular, the random graph $(\cC_{\infty}(\om), \cO(\om))$ satisfies $\prob_0$-a.s.\ an isoperimetric inequality for large sets.
\begin{lemma}[isoperimetric inequality for large sets]\label{lemma:iso:large_sets:random}
  Suppose that $\th \in (0, 1)$ satisfies Assumption~\ref{assumption:cluster}.  Then, for any $\om \in \Om_0$ and $n \geq N_0(\om)$, there exists $C_{\mathrm{iso}} \in (0, \infty)$ such that 
  \begin{align}\label{eq:iso:large_sets:random}
    |\partial^{\om} A| \;\geq\; C_{\mathrm{iso}}\, |A|^{(d-1)/d}
  \end{align}
  for all $A \subset B^{\om}(n)$ with $|A| \geq n^{\th}$.
\end{lemma}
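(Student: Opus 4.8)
The plan is to \emph{patch together} the local relative isoperimetric inequalities provided by Assumption~\ref{assumption:cluster} on balls whose radius is comparable to $|A|^{1/d}$. Fix $\om \in \Om_0$, $n \geq N_0(\om)$ and $A \subseteq B^{\om}(n)$ with $|A| \geq n^{\th}$. First I would fix the working scale $\ell \ldef \max\{\lceil \rh_0\rceil, \lceil n^{\th/d}\rceil\} \in \bbN$, where $\rh_0 \ldef (|A|/(2^{d-1} C_{\mathrm{V}}))^{1/d}$; the two properties that matter are that $\ell \geq n^{\th/d}$ (so that $\th$-very regularity applies to balls of radius $\ell$ and $2\ell$ centred in $B^{\om}(n)$) and that, because $|A| \geq n^{\th} \geq 1$, one has the two-sided bound $\rh_0 \leq \ell \leq C_2 |A|^{1/d}$ for a constant $C_2 = C_2(d, C_{\mathrm{V}})$, so that $\ell \asymp |A|^{1/d}$ and moreover $C_{\mathrm{V}}(2\ell)^d \geq 2|A|$.

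Next I would choose greedily a maximal set $\{x_i\}_{i\in I} \subseteq A$ that is $2\ell$-separated in the graph metric $d^{\om}$ (it is finite since $A$ is). By maximality the balls $B^{\om}(x_i, 2\ell)$ cover $A$, while the balls $B^{\om}(x_i, \ell)$ are pairwise disjoint. Since each $x_i \in B^{\om}(n)$ and $2\ell \geq n^{\th/d}$, the ball $B^{\om}(x_i, 2\ell)$ is regular, so there is a connected $\cS_i \ldef \cS^{\om}(x_i, 2\ell)$ with $B^{\om}(x_i,2\ell) \subseteq \cS_i \subseteq B^{\om}(x_i, 2 C_{\mathrm{W}}\ell)$, with $|\cS_i| \geq C_{\mathrm{V}}(2\ell)^d \geq 2|A|$, and on which \eqref{eq:ass:riso} holds with constant $C_{\mathrm{riso}}(2\ell)^{-1}$. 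Setting $A_i \ldef A \cap \cS_i$, one has $\sum_i |A_i| \geq |A|$ (the $\cS_i$ cover $A$) and $|A_i| \leq |A| \leq \tfrac12|\cS_i|$, hence $|\partial^{\om}_{\cS_i} A_i| \geq C_{\mathrm{riso}}(2\ell)^{-1}|A_i|$; moreover any edge in $\partial^{\om}_{\cS_i} A_i$ joins a vertex of $A$ to a vertex of $\cS_i \setminus A \subseteq \cC_{\infty}(\om)\setminus A$, hence lies in $\partial^{\om}A$. Summing over $i$ would give the lemma, provided one controls how many of the sets $\partial^{\om}_{\cS_i} A_i$ a fixed edge of $\partial^{\om}A$ can belong to.

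That overlap bound is the main obstacle, and it is where the hypotheses are really used. If $e = \{u,v\}$ with $u \in A$ lies in $\partial^{\om}_{\cS_i} A_i$, then $u \in \cS_i \subseteq B^{\om}(x_i, 2C_{\mathrm{W}}\ell)$, so $d^{\om}(u, x_i)\leq 2C_{\mathrm{W}}\ell$; for all such $i$ the disjoint balls $B^{\om}(x_i,\ell)$ — each of volume $\geq C_{\mathrm{V}}\ell^d$ by regularity — sit inside $B(u, \lceil(2C_{\mathrm{W}}+1)\ell\rceil)\cap\cC_{\infty}(\om)$, whose cardinality is at most $c_d\,\ell^d$ since $\ell \geq 1$; hence there are at most $K \ldef c_d/C_{\mathrm{V}}$ of them. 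Therefore
\begin{align*}
  K\,|\partial^{\om}A|
  \;\geq\;
  \sum_{i\in I}|\partial^{\om}_{\cS_i}A_i|
  \;\geq\;
  \frac{C_{\mathrm{riso}}}{2\ell}\sum_{i\in I}|A_i|
  \;\geq\;
  \frac{C_{\mathrm{riso}}}{2\ell}\,|A|
  \;\geq\;
  \frac{C_{\mathrm{riso}}}{2C_2}\,|A|^{(d-1)/d},
\end{align*}
which is \eqref{eq:iso:large_sets:random} with $C_{\mathrm{iso}} = C_{\mathrm{riso}}/(2KC_2)$, a constant depending only on $d, C_{\mathrm{V}}, C_{\mathrm{riso}}, C_{\mathrm{W}}$. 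The one delicate calibration is the choice of $\ell$: it must exceed the regularity threshold $n^{\th/d}$ yet stay of order $|A|^{1/d}$, and this is exactly why the statement is restricted to $|A| \geq n^{\th}$ — for smaller $A$ the covering scale would be forced above $|A|^{1/d}$ and the argument would only yield the weaker bound $|\partial^{\om}A|\gtrsim |A|/n^{\th/d}$.
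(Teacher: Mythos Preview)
Your proof is correct and follows essentially the same route as the paper: a covering at scale $r \asymp |A|^{1/d}$ by the sets $\cS^{\om}(x_i, r)$, application of the relative isoperimetric inequality \eqref{eq:ass:riso} on each piece (using $|A| \leq \tfrac{1}{2}|\cS^{\om}(x_i, r)|$), and a bounded-overlap argument based on the volume regularity of pairwise disjoint sub-balls. The only cosmetic difference is that you place the centres $x_i$ in $A$ itself, while the paper picks a maximal $r$-net in all of $B^{\om}(n)$; either choice works.
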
  
\begin{proof}
  First, note that \eqref{eq:iso:large_sets:random} follows trivially from \eqref{eq:ass:riso} for sets with $|A| \geq c n^d$.

  Consider $A \subset B^{\om}(n)$ with $|A| \geq n^{\th}$ and set $r^d \ldef (2 / C_{\mathrm{V}})\, |A|$.  Since $r \geq n^{\th/d}$, the Assumption~\ref{assumption:cluster} implies that any ball $B^{\om}(y, 3r)$ with $y \in B^{\om}(n)$ is regular.  Further, there exists a finite sequence $\{y_i \in B^{\om}(n) : i \in I\}$ such that
  \begin{align*}
    B^{\om}(y_i, r) \,\cap B^{\om}(y_j, r) \;=\; \emptyset
    \qquad \forall\, i, j \in I \text{  with } i \ne j
  \end{align*}
  and $B^{\om}(x, r) \cap \bigcup_{i \in I} B^{\om}(y_i, r) \ne \emptyset$ for all $x \in B^{\om}(n) \setminus \bigcup_{i \in I} B^{\om}(y_i, r)$.  Clearly, the sets $B^{\om}(y_i, 3r)$ cover the ball $B^{\om}(n)$, that is, $B^{\om}(n) \subset \bigcup_{i \in I} B(y_i, 3r)$. We claim that there exists $M < \infty$, independent of $n$, such that every $x \in B^{\om}(n)$ is contained in at most $M$ different balls $B^{\om}(y_i, 3C_{\mathrm{W}} r)$.  To prove this claim, set
  \begin{align*}
    I_x \;\ldef\; \big\{i \in I : x \in B^{\om}(y_i, 3C_{\mathrm{W}} r)\big\}.
  \end{align*}
  Notice that for any $i \in I_x$ we have that $B^{\om}(y_i, r) \subset B^{\om}(x, 4C_{\mathrm{W}}r)$.  By the fact that the sets $B^{\om}(y_i, r)$ are disjoint and regular, \eqref{eq:ass:balls} hold.  In particular,
  \begin{align*}
    C_0 (4 C_{\mathrm{W}})^d r^d
    \;\geq\;
    |B^{\om}(x, 4C_{\mathrm{W}} r)|
    \;\geq\;
    \sum_{x \in I_x} |B^{\om}(y_i, r)|
    \;\geq\;
    |I_x|\, C_{\mathrm{V}} r^d,
  \end{align*}
  where $C_0 \ldef \max_{k \geq 1} |B(k)|/k^d < \infty$.  Hence, $M \leq (4 C_{\mathrm{W}})^d c_0 / C_{\mathrm{V}}$ which completes the proof of the claim.  Further, set $A_i \ldef A \cap \cS(y_i, 3r)$.  Since $B^{\om}(y_i, 3r)$ is regular and $|A_i| \leq |A| \leq \frac{1}{2} |\cS(y_i,3r)|$ for any $i \in I$, \eqref{eq:ass:riso} implies that
  \begin{align*}
    |\partial^{\om} A|
    \;\geq\;
    \frac{1}{M}\, \sum_{i} |\partial_{\cS(y_i, r)}^{\om} A_i|
    \;\geq\;
    \frac{C_{\mathrm{riso}}}{M r} \sum_{i} |A_i|
    \;\geq\;
    \frac{C_{\mathrm{riso}}}{M r}\, |A|
    \;\geq\;
    \frac{C_{\mathrm{riso}} C_{V}}{2 M}\, |A|.
  \end{align*}
  By setting $C_{\mathrm{iso}} \ldef C_{\mathrm{riso}} C_{\mathrm{V}} / (2 M)$, the assertion \eqref{eq:iso:large_sets:random} follows.
\end{proof}
The next proposition relies on the application of the Moser iteration scheme that has been established for general graphs in \cite{ADS15}.  A key ingredient in this approach is the following Sobolev inequality 
\begin{align}
  \Norm{u}{d'/(d'-1), B^{\om}(n)}
  \;\leq\;
  C_{\mathrm{S}_1}\, \frac{n}{|B^{\om}(n)|}\,
  \sum_{x \vee y \in B^{\om}(n)}\mspace{-9mu}
  |u(x) - u(y)|\, \indicator_{\{x,y\} \in \cO(\om)}
\end{align}
for some suitable $d'$ that we will prove in Proposition~\ref{prop:sobolev:S1}.
\begin{prop}[maximal inequality]\label{prop:maximal:inequality}
  Suppose $\th \in (0, 1)$ and $p, q \in [1, \infty]$ satisfies Assumption~\ref{assumption:cluster} and \ref{assumption:pq}.  Then, for every $\al > 0$ there exist $\ga' > 0$ and $\ka' > 0$ and $c(p, q, \th, d) < \infty$ such that for any $\om \in \Om_0$ and $j = 1, \ldots, d$,
  \begin{align*}
    \max_{x \in B^{\om}(n)}\,
    \big|\tfrac{1}{n}\, \chi_j(\om, x)\big|
    \;\leq\;
    c\,
    \Big(
      1 \vee \Norm{\mu^{\om}}{p, B^{\om}(n)}\, \Norm{\nu^{\om}}{q, B^{\om}(n)}
    \Big)^{\!\ka'}\,
    \Norm{\tfrac{1}{n}\,\chi_j(\om, \cdot)}{\al, B^{\om}(2n)}^{\ga'}.
  \end{align*}
\end{prop}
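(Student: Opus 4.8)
The plan is to recognize $\tfrac1n\chi_j(\om,\cdot)$ as the solution of a Poisson equation on the ball $B^\om(2n)$ and to run the Moser iteration scheme of \cite{ADS15} on the random graph $(\cC_\infty(\om),\cO(\om))$. First, since $\Phi_j=\Pi_j-\chi_j$ is $\cL^\om$-harmonic by Proposition~\ref{prop:constr_corr}, the function $u:=\tfrac1n\chi_j(\om,\cdot)$ satisfies
\[
  \cL^\om u(x)\;=\;\tfrac1n\,\cL^\om\Pi_j(x)\;=\;\tfrac1n\sum_{y\sim x}\om(\{x,y\})\,(y_j-x_j)\;=:\;f(x),\qquad x\in\cC_\infty(\om),
\]
and since $|y_j-x_j|\le 1$ we get the pointwise bound $|f|\le\tfrac1n\mu^\om$, hence $\Norm{f}{p,B^\om(2n)}\le\tfrac1n\Norm{\mu^\om}{p,B^\om(2n)}$. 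It therefore suffices to establish a purely deterministic maximal inequality for solutions of $\cL^\om u=f$ on a ball all of whose relevant sub-balls are regular, with the constant expressed through $\Norm{\mu^\om}{p,\cdot}$ and $\Norm{\nu^\om}{q,\cdot}$.

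The analytic core is a one-step reverse-H\"older estimate. Testing the equation against $\eta^2\,u\,|u|^{2\be-2}$ with a cutoff $\eta$ and $\be\ge\tfrac12$, a discrete summation by parts together with Young's inequality yields a Caccioppoli estimate bounding $\sum_{x,y}\om(\{x,y\})\,|\nabla(\eta\,|u|^\be)|^2$ by $c\be^2\sum_{x,y}\om(\{x,y\})\,|\nabla\eta|^2\,(|u|\SUP1)^{2\be}$ plus a term coming from $f$. Combining this with the weighted Sobolev inequality of Proposition~\ref{prop:sobolev:S1} --- whose validity on $B^\om(n)$ rests on the isoperimetric inequality for large sets (Lemma~\ref{lemma:iso:large_sets:random}) together with the interpolation that covers small sets (Lemma~\ref{lemma:iso:all_sets}), both consequences of Assumption~\ref{assumption:cluster}, and on a Cauchy--Schwarz step relating the $\ell^2(\om)$-Dirichlet form to the unweighted gradient, which is where $\Norm{\nu^\om}{q,\cdot}$ enters while $\Norm{\mu^\om}{p,\cdot}$ enters through the weight and through $f$ --- I would obtain, for radii $n\le r'<r\le 2n$,
\[
  \Norm{|u|\SUP1}{2\rho\be,\,B^\om(r')}\;\le\;\Big(\tfrac{c\,r}{r-r'}\,\be\,K^\om(n)\Big)^{1/\be}\,\Norm{|u|\SUP1}{2\be,\,B^\om(r)},
\]
with $K^\om(n):=1\SUP\Norm{\mu^\om}{p,B^\om(2n)}\Norm{\nu^\om}{q,B^\om(2n)}$ and a fixed exponent $\rho=\rho(p,q,\th,d)>1$. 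Next I would iterate with $\be_k=\rho^k\be_0$ and radii $r_k=n(1+2^{-k})$ decreasing from $2n$ to $n$; summing the convergent series $\sum_k\be_k^{-1}$ and $\sum_k\be_k^{-1}\log\be_k$ gives $\max_{B^\om(n)}(|u|\SUP1)\le c\,\big(K^\om(n)\big)^{\ka'}\,\Norm{|u|\SUP1}{2\be_0,\,B^\om(2n)}$ with $\ka'=\be_0^{-1}\sum_k\rho^{-k}$. Choosing $\be_0\ge\al/2$ when $\al\ge1$, and for $\al<1$ first interpolating $\Norm{\,\cdot\,}{2\be_0,B^\om(2n)}$ between $\Norm{\,\cdot\,}{\al,B^\om(2n)}$ and the maximum over a slightly larger ball and then absorbing the latter by Young's inequality, produces the exponent $\ga'$ and, after discarding the harmless $\SUP1$ and renaming constants, the asserted bound with $\tfrac1n\chi_j(\om,\cdot)$ in place of $u$.

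The hard part is concentrated before the iteration. First, one must establish the Sobolev inequality of Proposition~\ref{prop:sobolev:S1} with a usable exponent on a ball that is only $\th$-very regular: the isoperimetric estimate $|\partial^\om A|\ge C_{\mathrm{iso}}\,|A|^{(d-1)/d}$ is available only for $|A|\ge n^\th$, so small sets must be treated by interpolating against the trivial bound, which effectively degrades the isoperimetric dimension from $d$ to $d/(d-\th)$. Second, the H\"older losses against $\mu^\om$ and $\nu^\om$ combined with this degraded dimension must still leave $\rho>1$; tracking the exponents shows that the iteration closes precisely when $\tfrac1p+\tfrac1q<\tfrac{2(1-\th)}{d-\th}$, i.e.\ exactly under Assumption~\ref{assumption:pq}. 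The remaining steps --- the discrete summation by parts, the Young absorptions, and the bookkeeping of the dependence of $\ga'$ and $\ka'$ on $p,q,\th,d$ --- are routine and follow \cite{ADS15} with only cosmetic changes.
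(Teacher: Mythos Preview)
Your proposal is correct and follows essentially the same route as the paper: the paper's proof simply establishes the Sobolev inequality of Proposition~\ref{prop:sobolev:S1} (via Lemma~\ref{lemma:iso:large_sets:random} and the interpolation of Lemma~\ref{lemma:iso:all_sets}) and then invokes \cite[Corollary~3.9]{ADS15} as a black box with $f(x)=\tfrac1n x_j$, whereas you have spelled out the Caccioppoli--Sobolev--reverse-H\"older machinery that this corollary contains. One arithmetic slip worth fixing: the interpolation in Lemma~\ref{lemma:iso:all_sets} degrades the effective Sobolev dimension from $d$ to $d'=(d-\th)/(1-\th)$ (via $\ze=(1-\th)/(1-\th/d)$ and $d'=d/\ze$), not to $d/(d-\th)$; with this $d'$ the Moser iteration closes precisely when $\tfrac1p+\tfrac1q<\tfrac{2}{d'}=\tfrac{2(1-\th)}{d-\th}$, which is Assumption~\ref{assumption:pq}.
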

\begin{proof}
  In view of Lemma~\ref{lemma:iso:large_sets:random} and Assumption~\ref{assumption:cluster}, for any $\om \in \Om_0$ and $n \geq N_0(\om)$ the assumptions of Proposition~\ref{prop:sobolev:S1} are satisfied.  Further, let $\ze = (1-\th)/(1-\th/d)$ and set $d' = (d-\th)/(1-\th)$.  Then, Proposition~\ref{prop:sobolev:S1} implies that 
  \begin{align*}
    \Norm{u}{d'/(d'-1), B^{\om}(n)}
    \;=\;
    \Norm{u}{d/(d-\ze), B^{\om}(n)}
    \;\leq\;
    C_{\mathrm{S}_1}\, \frac{n}{|B^{\om}(n)|}\,
    \sum_{\substack{x \vee y \in B^{\om}(n)\\\{x,y\} \in \cO(\om)}}\mspace{-9mu}
    |u(x) - u(y)|
  \end{align*}
  for any $u\!: \cC_{\infty}(\om) \to \bbR$ with $\supp u \subset B^{\om}(n)$.  By taking this inequality as a starting point and using the fact that by definition $\chi(\om,x) = 0$ for any $x \in \bbZ^d \setminus \cC_{\infty}(\om)$, the assertion for $\tfrac{1}{n} \chi_j(\om, \cdot)$ follows directly from \cite[Corollary~3.9]{ADS15} with $f(x) = \frac{1}{n} x_j$, $x_0 = 0$, $\si = 1$, $\si' = 1/2$, $n$ replaced by $2n$ and $d$ replaced by $d'$.
\end{proof}
Proposition~\ref{prop:sublinearity:infty} follows immediately from Proposition~\ref{prop:maximal:inequality} with the choice $\al=1$, combined with Proposition~\ref{prop:sublinearity:1} and Lemma~\ref{lemma:ergodic:weights}.  

\begin{proof}[Proof of Theorem~\ref{thm:QIP:X}]
  Proceeding as in the proof of \cite[Proposition~2.13]{ADS15} (with the minor modification that the exit time $T_{L,n}$ of the rescaled process $X^{(n)}$ from the cube $[-L, L]^d$ is replaced by $T_{L,n}^{\om} \ldef \inf\{t \geq 0 : X_{t n^2} \not\in B^{\om}(n)\}$), the {$\ell^{\infty}$-sublinearity} of the corrector that we have established in Proposition~\ref{prop:sublinearity:infty} implies that for any $T > 0$ and $\prob_0$-a.e.\ $\om$
  \begin{align*}
    \sup_{t \leq T} \big| \tfrac{1}{n}\, \chi(\om, X_{t n^2})|
    \;\underset{n \to \infty}{\longrightarrow}\;
    0,
    \qquad \text{in } \Prob_0^{\om}\text{-probability.}
  \end{align*}
  Thus, the assertion of Theorem~\ref{thm:QIP:X} now follows from Proposition \ref{prop:QIP:martingale}.
\end{proof}

\section{Sobolev inequalities on graphs}
\label{sec:Sobolev}
As seen in the previous section, both the Sobolev and the anchored Sobolev inequality turned out to be a crucial tool in order to prove the $\ell^1$- and $\ell^{\infty}$-sublinearity of the corrector.  In this section we will prove these inequalities for general graphs.

\subsection{Setup and preliminaries}
Let us consider an infinite, connected, locally finite graph $G = (V, E)$ with vertex set $V$ and edge set $E$.  Let $d$ be the natural graph distance on $G$.  We denote by $B(x,r)$ the closed ball with center $x$ and radius $r$, i.e. $B(x,r) \ldef \{y \in V \mid d(x,y) \leq \lfloor r \rfloor\}$.  The graph is endowed with the counting measure, i.e.\ the measure of $A \subset V$ is simply the number $|A|$ of elements in $A$.  Given a non-empty subset $B \subseteq V$, we define for any $A \subset B$ the \emph{relative boundary} of $A$ with respect to $B$ by
\begin{align*}
  \partial_B A
  \;\ldef\;
  \big\{
    \{x, y\} \in E \;:\;
    x \in A \text{ and } y \in B \setminus A
  \big\},
\end{align*}
and we simply write $\partial A$ instead of $\partial_V A$. We impose the following assumption on the properties of the graph $G$.
\begin{assumption}\label{ass:graph} 
  For some $d \geq 2$, there exist constants $c_{\mathrm{reg}}, C_{\mathrm{reg}}, C_{\mathrm{riso}}, C_{\mathrm{iso}} \in (0, \infty)$, $C_{\mathrm{W}} \in [1, \infty)$ and $\th \in (0, 1)$ such that for all $x \in V$ it holds
  \begin{itemize}
  \item[(i)] volume regularity of order $d$ for large balls: there exists $N_1(x) < \infty$ such that for all $n \geq N_1(x)$,
    \begin{align}\label{eq:reg:large_balls}
      c_{\mathrm{reg}}\, n^d \;\leq\; |B(x, n)| \;\leq\; C_{\mathrm{reg}}\, n^d.
    \end{align}
  \item[(ii)] (weak) relative isoperimetric inequality: there exists $N_2(x) < \infty$ and an increasing sequence $\{\cS(x, n) \subset V : n \in \bbN\}$ of connected sets such that for all $n \geq N_2(x)$,
    \begin{align}\label{eq:riso:balls}
      B(x,n) \;\subset\; \cS(x,n) \;\subset\; B(x, C_{\mathrm{W}} n)
    \end{align}
    and
    \begin{align}\label{eq:riso:weak}
      |\partial_{\cS(x, n)} A|
      \;\geq\;
      C_{\mathrm{riso}}\, n^{-1}\, |A|
    \end{align}
    for all $A \subset \cS(x, n)$ with $|A| \leq \frac{1}{2} |\cS(x, n)|$.
  \item[(iii)] isoperimetric inequality for large sets: there exists $N_3(x) < \infty$ such that for all $n \geq N_3(x)$,
    \begin{align}\label{eq:iso:large_sets}
      |\partial A|
      \;\geq\;
      C_{\mathrm{iso}}\, |A|^{(d-1)/d},
    \end{align}
    for all $A \subset B(x, n)$ with $|A| \geq n^{\th}$.
  \end{itemize}
\end{assumption}
\begin{remark}\label{rem:iso:mod}
  Suppose that a graph $G$ satisfies the relative isoperimetric inequality \eqref{eq:riso:weak} and $C_1 \in (0, 1/2)$.  Then, for all $n \geq N_2(x)$ and any $A \subset \cS(x,n)$ such that $\frac{1}{2} |\cS(x,n)| < |A| < (1-C_1) |\cS(x,n)|$, we have that
  \begin{align*}
    |\partial_{\cS(x, n)} A|
    \;=\;
    |\partial_{\cS(x, n)} (\cS(x, n) \setminus A)|
    \;\geq\;
    C_{\mathrm{riso}}\, n^{-1}\, |\cS(x, n) \setminus A|
    \;\geq\;
    C_{\mathrm{riso}}\, C_1\, n^{-1}\, |A|.
  \end{align*}
  Thus, any such set $A$ also satisfies the relative isoperimetric inequality however with a smaller constant.  
\end{remark}

\subsection{Sobolev inequality for functions with compact support}
By introducing an effective dimension, we first prove a (weak) isoperimetric inequality that holds for all subsets $A \subset B(x, n)$ provided that $n$ is large enough.
\begin{lemma}\label{lemma:iso:all_sets}
  Suppose that Assumption~\ref{ass:graph}~(i) and (iii) hold for some $\th \in [0, 1)$ and let $\zeta \in \big[0, \frac{1 - \th}{1 - \th/d}\big]$.  Then, for all $x \in V$ and $n \geq N_1(x) \vee N_3(x)$,
  \begin{align}\label{eq:iso:all_sets}
    \frac{|\partial A|}{|A|^{(d-\ze)/d}}
    \;\geq\;
    \frac{C_{\mathrm{iso}}/C_{\mathrm{reg}}^{(1 - \ze)/d} \wedge 1}{n^{1 - \ze}},
    \qquad \forall\, A \subset B(x, n).
  \end{align}
\end{lemma}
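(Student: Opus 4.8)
The plan is to split the argument into two regimes according to whether $A$ is ``large'' or ``small'' on the scale $n^{\th}$, handling large sets with the isoperimetric inequality \eqref{eq:iso:large_sets} from Assumption~\ref{ass:graph}~(iii) and small sets with the trivial bound $|\partial A| \geq 1$. First I would dispose of the degenerate case $A = \emptyset$ (both sides of \eqref{eq:iso:all_sets} vanish) and record that, since $G$ is infinite and connected while $A \subset B(x,n)$ is finite, the set $\partial A$ is non-empty whenever $A \neq \emptyset$, so $|\partial A| \geq 1$ in all remaining cases. Throughout we use $n \geq N_1(x) \vee N_3(x) \geq 1$.

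In the regime $|A| \geq n^{\th}$, \eqref{eq:iso:large_sets} gives $|\partial A| \geq C_{\mathrm{iso}}\, |A|^{(d-1)/d}$. I would then write the exponent as $\tfrac{d-1}{d} = \tfrac{d-\ze}{d} - \tfrac{1-\ze}{d}$, note that $1-\ze \geq 0$ since $\ze \leq \tfrac{1-\th}{1-\th/d} \leq 1$, and use that $A \subset B(x,n)$ with $n \geq N_1(x)$ forces $|A| \leq C_{\mathrm{reg}} n^d$ by \eqref{eq:reg:large_balls}, to obtain
\begin{align*}
  |A|^{(d-1)/d}
  \;=\;
  |A|^{(d-\ze)/d}\,|A|^{-(1-\ze)/d}
  \;\geq\;
  |A|^{(d-\ze)/d}\,\big(C_{\mathrm{reg}}\,n^{d}\big)^{-(1-\ze)/d}
  \;=\;
  \frac{|A|^{(d-\ze)/d}}{C_{\mathrm{reg}}^{(1-\ze)/d}\,n^{1-\ze}}.
\end{align*}
This yields $|\partial A|/|A|^{(d-\ze)/d} \geq C_{\mathrm{iso}}\,C_{\mathrm{reg}}^{-(1-\ze)/d}\,n^{-(1-\ze)}$ for all such $A$.

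In the regime $|A| < n^{\th}$, the key observation is that the admissible range $\ze \leq \tfrac{1-\th}{1-\th/d}$ is exactly equivalent to $\th\,\tfrac{d-\ze}{d} \leq 1-\ze$; hence, using $n \geq 1$,
\begin{align*}
  |A|^{(d-\ze)/d}
  \;<\;
  n^{\th(d-\ze)/d}
  \;\leq\;
  n^{1-\ze},
\end{align*}
and together with $|\partial A| \geq 1$ this gives $|\partial A|/|A|^{(d-\ze)/d} \geq n^{-(1-\ze)}$. Taking the minimum of the constants from the two regimes produces precisely the right-hand side of \eqref{eq:iso:all_sets}. I do not anticipate a genuine obstacle here: the proof is essentially bookkeeping of exponents, and the only point requiring care is verifying that the constraint imposed on $\ze$ in the hypothesis is exactly the one making the small-set estimate $n^{\th(d-\ze)/d} \leq n^{1-\ze}$ valid — which is why that constraint appears.
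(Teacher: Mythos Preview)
Your proof is correct and follows essentially the same approach as the paper: both split into the two regimes $|A| \geq n^{\th}$ and $|A| < n^{\th}$, handle the first via \eqref{eq:iso:large_sets} combined with the volume upper bound \eqref{eq:reg:large_balls}, and handle the second via the trivial bound $|\partial A| \geq 1$ together with the exponent inequality $\th(d-\ze)/d \leq 1-\ze$ encoded in the constraint on $\ze$. Your version is slightly more explicit in justifying $|\partial A| \geq 1$ and treating $A = \emptyset$, but otherwise the arguments coincide.
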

\begin{remark}
  Let $d \geq 2$ and $\ze \in [0, \frac{1 - \th}{1-\th/d}]$ for some $\th \in [0, 1)$.  By setting $d' \ldef d/\ze$, we have that $(d'-1)/d' = (d-\ze)/d$. Thus, \eqref{eq:iso:all_sets} corresponds to a (weak) isoperimetric inequality with $d$ replaced by $d'$.
\end{remark}
\begin{proof}
  Consider $\ze \in \big[0, (1 - \th)/(1 - \th/d)\big]$ and $x \in V$.  For any $n \geq N_1(x) \vee N_3(x)$, let $A \subset B(x, n)$ be non-empty.  In the sequel, we proceed by distinguish two cases: $|A|\geq n^{\th}$ and $|A|< n^{\th}$.  If $|A| \geq n^{\th}$, we have
  \begin{align*}
    \frac{|\partial A|}{|A|^{(d-\ze)/d}}
    \overset{\eqref{eq:iso:large_sets}}{\;\geq\;}
    \frac{C_{\mathrm{iso}}}{|A|^{(1-\ze)/d}}
    \;\geq\;
    \frac{C_{\mathrm{iso}}}{|B(x, n)|^{(1-\ze)/d}}
    \overset{\eqref{eq:reg:large_balls}}{\;\geq\;}
    \frac{C_{\mathrm{iso}}/C_{\mathrm{reg}}^{(1-\ze)/d}}{n^{1-\ze}}.
  \end{align*}
  On the other hand, in case $|A| < n^{\th}$, due to the choice of $\ze$ we obtain that
  \begin{align*}
    \frac{|\partial A|}{|A|^{(d-\ze)/d}}
    \;\geq\;
    \frac{1}{n^{\th(1 - \ze/d)}}
    \;\geq\;
    \frac{1}{n^{1-\ze}}.
  \end{align*}
  This completes the proof.
\end{proof}
\begin{prop}[Sobolev inequality]\label{prop:sobolev:S1}
  Suppose that Assumption~\ref{ass:graph}~(i) and (iii) hold for some $\th \in [0, 1)$.  Then, for any $\zeta \in \big[0, \frac{1 - \th}{1 - \th/d}\big]$, there exists $C_{\mathrm{S}_1} \equiv C_{\mathrm{S}_1}(\th, d) \in (0, \infty)$ such that, for any $x \in V$ and $n \geq N_1(x) \vee N_3(x)$,
  \begin{align}\label{eq:sobolev:S1}
    \Bigg(
      \frac{1}{|B(x, n)|}\, \sum_{y \in B(x, n)}\! |u(y)|^{\frac{d}{d - \ze}}
    \Bigg)^{\!\!\frac{d - \ze}{d}}
    \;\leq\;
    \frac{C_{\mathrm{S_1}} n}{|B(x, n)|}
    \sum_{\substack{y \vee y' \in B(x,n)\\ \{y, y'\} \in E}}\mspace{-6mu}
    \big|u(y) - u(y') \big|
  \end{align}
  for every function $u\! : V \to \bbR$ with $\supp u \subset B(x, n)$.
\end{prop}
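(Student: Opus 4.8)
The plan is to derive \eqref{eq:sobolev:S1} from the effective-dimension isoperimetric inequality of Lemma~\ref{lemma:iso:all_sets} via the classical Federer--Fleming coarea argument. Replacing $u$ by $|u|$ leaves the left-hand side of \eqref{eq:sobolev:S1} unchanged while, by the reverse triangle inequality $\big||u(y)| - |u(y')|\big| \le |u(y) - u(y')|$, it can only decrease the right-hand side; hence it suffices to prove the inequality for a nonnegative $u \ge 0$ with $\supp u \subset B(x,n)$. For $t \ge 0$ set $A_t \ldef \{y \in V : u(y) > t\}$; since $u$ vanishes outside $B(x,n)$ we have $A_t \subset B(x,n)$ for every $t \ge 0$, so for $n \ge N_1(x) \vee N_3(x)$ and $\zeta \le (1-\th)/(1-\th/d)$ Lemma~\ref{lemma:iso:all_sets} applies to each $A_t$ and yields, with $c_{\zeta} \ldef \big(C_{\mathrm{iso}}\, C_{\mathrm{reg}}^{-(1-\zeta)/d}\big) \wedge 1$, the bound $|\partial A_t| \ge c_{\zeta}\, n^{\zeta-1}\, |A_t|^{(d-\zeta)/d}$.

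I would then invoke the two elementary facts underlying the coarea formula. First, for each edge $\{y,y'\}$ one has $|u(y) - u(y')| = \int_0^\infty \indicator_{\{y,y'\} \in \partial A_t}\, \md t$; summing over edges and noting that every edge of $\partial A_t$ has an endpoint in $A_t \subset B(x,n)$, so that the constraint $y \vee y' \in B(x,n)$ in \eqref{eq:sobolev:S1} is automatic (and the omitted terms vanish since $u$ is supported in $B(x,n)$), gives
\begin{align*}
  \sum_{\substack{y \vee y' \in B(x,n) \\ \{y,y'\} \in E}} \big|u(y) - u(y')\big|
  \;=\;
  \int_0^\infty |\partial A_t|\, \md t .
\end{align*}
Second, from the pointwise identity $u(y) = \int_0^\infty \indicator_{y \in A_t}\, \md t$ and Minkowski's inequality for the $\ell^{d/(d-\zeta)}$-norm (valid since $d/(d-\zeta) \ge 1$ because $0 \le \zeta < d$),
\begin{align*}
  \bigg( \sum_{y \in B(x,n)} |u(y)|^{\frac{d}{d-\zeta}} \bigg)^{\!\!\frac{d-\zeta}{d}}
  \;\le\;
  \int_0^\infty |A_t|^{\frac{d-\zeta}{d}}\, \md t
  \;\overset{\eqref{eq:iso:all_sets}}{\le}\;
  \frac{1}{c_{\zeta}\, n^{\zeta-1}} \int_0^\infty |\partial A_t|\, \md t .
\end{align*}
Chaining these two displays bounds $\big( \sum_{y \in B(x,n)} |u(y)|^{d/(d-\zeta)} \big)^{(d-\zeta)/d}$ by $c_{\zeta}^{-1}\, n^{1-\zeta}$ times the edge sum on the right-hand side of \eqref{eq:sobolev:S1}.

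Finally, dividing through by $|B(x,n)|^{(d-\zeta)/d}$ and writing $n^{1-\zeta}\, |B(x,n)|^{-(d-\zeta)/d} = \big(|B(x,n)|^{\zeta/d}\, n^{-\zeta}\big)\, n\, |B(x,n)|^{-1}$, the upper volume bound $|B(x,n)| \le C_{\mathrm{reg}}\, n^d$ from \eqref{eq:reg:large_balls} gives $|B(x,n)|^{\zeta/d}\, n^{-\zeta} \le C_{\mathrm{reg}}^{\zeta/d}$, so \eqref{eq:sobolev:S1} follows with $C_{\mathrm{S}_1} = C_{\mathrm{reg}}^{\zeta/d}/c_{\zeta}$. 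Since $\zeta \le (1-\th)/(1-\th/d) < d$ for $d \ge 2$, this quantity stays bounded over the admissible range of $\zeta$ and depends only on $d$, $\th$ and the constants of Assumption~\ref{ass:graph}. I do not anticipate a genuine obstacle: the one substantial ingredient — handling the small sets $|A| < n^{\th}$ through the effective dimension $d' = d/\zeta$ — has already been absorbed into Lemma~\ref{lemma:iso:all_sets}, and the rest is the standard coarea/Minkowski machinery. The only points requiring a little care are that each superlevel set $A_t$ genuinely lies in $B(x,n)$ (so the isoperimetric inequality is available), the bookkeeping identifying the coarea edge sum with the restricted sum appearing in \eqref{eq:sobolev:S1}, and keeping $C_{\mathrm{S}_1}$ uniform in $\zeta$.
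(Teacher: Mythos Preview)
Your proposal is correct and follows essentially the same route as the paper: coarea formula on the superlevel sets $A_t \subset B(x,n)$, then the effective-dimension isoperimetric bound of Lemma~\ref{lemma:iso:all_sets}. The only cosmetic differences are that the paper reduces to the endpoint $\zeta = (1-\th)/(1-\th/d)$ via Jensen's inequality (rather than tracking the $\zeta$-dependence of the constant as you do), and passes from $\int_0^\infty |A_t|^{(d-\zeta)/d}\,\md t$ to $\|u\|_{d/(d-\zeta)}$ by H\"older duality instead of Minkowski's inequality; these are equivalent formulations of the same step.
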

\begin{proof}  
  The assertion follows by an application of the co-area formula as in \cite[Proposition~3.4]{Wo00}.  Nevertheless, we will repeat it here for the readers'\ convenience.

  Let $u\!: V \to \bbR$ be a function with $\supp u \subset B(x, n)$.  Note that it is enough to consider $u \geq 0$.  Moreover, by Jensen's inequality it suffices to prove \eqref{eq:sobolev:S1} for $\ze = \frac{1-\th}{1 - \th/d}$.  Define for $t \geq 0$ the super-level sets of $u$ by $A_t = \{y \in V : u(y) > t\}$.  Obviously, $A_t \subset B(x, n)$ for any $t \geq 0$.  Thus, 
  \begin{align*}
    \sum_{\{y,y'\} \in E}\! \big|u(y) - u(y')\big| 
    &\;=\; 
    \int_0^\infty\! |\partial A_{t}|\; \mathrm{d}t
    \overset{\!\eqref{eq:iso:all_sets}\!}{\;\geq\;}
    \frac{C_{\mathrm{iso}}/C_{\mathrm{reg}}^{(1-\ze)/d} \wedge 1}{n^{1 - \ze}}\,
    \int_0^\infty\! |A_{t}|^{1-\zeta/d}\; \mathrm{d}t.
  \end{align*}
  Now, consider a function $g\!: B(x_0, n) \to  [0, \infty)$ with $\norm{g}{\al_*}{V} = 1$ where $\al_* = d / \ze$ and $\al = d / (d-\ze)$.  Notice, that $1/\al + 1/\al_* = 1$ and $1 - \ze / d = 1/\al$.  Since $|A_{t}|^{1/\al} \geq \left\langle \indicator_{A_{t}}, g \right\rangle$ by H\"older's inequality, we obtain that
  \begin{align*}
    \int_0^\infty\! |A_{t}|^{1-\zeta/d}\; \mathrm{d}t
    \;\geq\;
    \int_0^\infty\! \left\langle \indicator_{A_{t}}, g \right\rangle\, \mathrm{d}t
    \;=\;
    \left\langle u, g \right\rangle.
  \end{align*}
  Here, $\langle \cdot, \cdot \rangle$ denotes the scalar product in $\ell^2(V)$.  Finally, taking the supremum over all $g\!: B(x_0, n) \to [0, \infty)$ with $\norm{g}{\al_*}{V} = 1$ implies the assertion \eqref{eq:sobolev:S1}.
\end{proof}
\begin{remark}
  It is well known, see \cite[Lemma 3.3.3]{Sa97}, that for functions $u\!: V \to \bbR$ that are not compactly supported, the following (weak) Poincar\'e inequality follows from the (weak) relative isoperimetric inequality: For $x \in V$ and $n \geq N_2(x)$
  \begin{align*}
    \inf_{m \in \bbR}\, \sum_{y \in B(x, n)}\! |u(y) - m|
    \;\leq\;
    C_{\mathrm{riso}}^{-1}\,n
    \sum_{\substack{y, y' \in B(x, C_\mathrm{W}n)\\ \{y, y'\} \in E}}\mspace{-6mu}
    \big|u(y) - u(y')\big|.
  \end{align*}
\end{remark}

\subsection{Anchored Sobolev inequality}
As a second result, we prove a Sobolev inequality for functions with unbounded support that vanishes at some point $x \in V$.  The proof is based on an \emph{anchored} relative isoperimetric inequality.  For this purpose, let $w\!: E \to (0, \infty)$ be an edge weight and for any $A \subset B$ non-empty we write
\begin{align*}
  |\partial_B A|_w
  \;\ldef\;
  \sum_{e \in \partial_B A} w(e).
\end{align*}
\begin{lemma}[anchored relative isoperimetric inequality]\label{lemma:riso:weight}
  Suppose that the graph $G$ satisfies Assumption~\ref{ass:graph}~(i) and (ii).  For any $x \in V$ and $\ve \in (0, 1)$, choose $n$ large enough such that $\lfloor n^{(1-\ve)/(d-\ve)}\rfloor \geq N_1(x) \vee N_2(x)$.  Further, assume that
  \begin{align}
    w_n(\{y,y'\})
    \;\geq\;
    (n / \max\{d(x, y), d(x, y')\})^{d-\ve}
    \qquad \forall\, \{y, y'\} \in E.
  \end{align}
  Then, there exists $C_2 \in (0, \infty)$ such that
  \begin{align}\label{eq:riso:weight}
    |\partial_{\cS(x, n)} A|_{w_n}
    \;\geq\;
    \frac{C_2}{n}\, |A|,
    \qquad \forall\, A \subset \cS(x, n) \setminus \{x\}.
  \end{align}
\end{lemma}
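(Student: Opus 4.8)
\emph{The plan.} I would derive \eqref{eq:riso:weight} from the weak relative isoperimetric inequality \eqref{eq:riso:weak} applied at a dyadic scale chosen adaptively to the ``shape'' of $A$ inside $\cS(x,n)$. Throughout write $\cS\equiv\cS(x,n)$ and $A^c\equiv\cS\setminus A$ (so $x\in A^c$, since $x\notin A$), put $\ell\ldef\lfloor n^{(1-\ve)/(d-\ve)}\rfloor$ — which is $\ge N_1(x)\vee N_2(x)$ by hypothesis and satisfies $(n/\ell)^{d-\ve}\ge n^{d-1}$ — and let $\mu_0$ be the smallest power of $2$ with $\mu_0\ge\ell$, so $\mu_0<2\ell$. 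We may assume $A\ne\emptyset$ and record $c_{\mathrm{reg}}n^d\le|\cS|\le C_{\mathrm{reg}}(C_{\mathrm{W}}n)^d$.

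\emph{Core estimate.} For every dyadic $m$ with $\mu_0\le m\le n$ the sets $A\cap\cS(x,m)$ and $\cS(x,m)\cap A^c$ partition $\cS(x,m)$, so one of them — call it $A'$ — has $|A'|\le\tfrac12|\cS(x,m)|$; since $m\ge\ell\ge N_2(x)$, \eqref{eq:riso:weak} gives $|\partial_{\cS(x,m)}A'|\ge C_{\mathrm{riso}}\,m^{-1}\,q(m)$, where $q(m)\ldef\min\bigl(|A\cap\cS(x,m)|,\,|\cS(x,m)\cap A^c|\bigr)$. As $\cS(x,m)\subset\cS$, each such cut edge joins $A$ to $A^c$ and hence lies in $\partial_{\cS}A$, and by \eqref{eq:riso:balls} has both endpoints in $B(x,C_{\mathrm{W}}m)$, so by the assumed lower bound on $w_n$ it carries $w_n$-weight $\ge(n/(C_{\mathrm{W}}m))^{d-\ve}$. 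This yields
\begin{align*}
  |\partial_{\cS(x,n)}A|_{w_n}
  \;\ge\;
  \Bigl(\tfrac{n}{C_{\mathrm{W}}m}\Bigr)^{d-\ve}\frac{C_{\mathrm{riso}}}{m}\,q(m)
  \qquad\text{for every dyadic }m\in[\mu_0,n].
\end{align*}
In particular, if $q(m)\ge c\,m^d$ for some such $m$, the right-hand side is $\ge c\,C_{\mathrm{riso}}C_{\mathrm{W}}^{-(d-\ve)}\,n^{d-\ve}m^{\ve-1}\ge c\,C_{\mathrm{riso}}C_{\mathrm{W}}^{-(d-\ve)}\,n^{d-1}$ (using $m\le n$, $\ve<1$), which dominates $|A|/n$; producing one such scale is the goal.

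\emph{Choosing the scale.} I would distinguish three configurations. (i) If $\min(|A|,|A^c|)\ge c_*|A|$ for a fixed small $c_*$: take $m=n$, where \eqref{eq:riso:weak} at scale $n$ bounds $|\partial_{\cS}A|$ below by $C_{\mathrm{riso}}n^{-1}\min(|A|,|A^c|)$ and each cut edge has weight $\ge C_{\mathrm{W}}^{-(d-\ve)}$. Otherwise $|A^c|<c_*|A|$, hence $|A|\ge\tfrac23|\cS|\ge\tfrac23 c_{\mathrm{reg}}n^d$ and it suffices to produce $w_n$-mass $\gtrsim n^{d-1}$. Let $\rho\ldef\mathrm{dist}(x,A)\ge1$. (ii) If $\rho\le\mu_0$: on a geodesic from $x$ to a nearest vertex of $A$, the edge where the geodesic leaves $A^c$ lies in $\partial_{\cS}A$ (its endpoints are at distance $\le\mu_0<n$ from $x$, hence in $B(x,n)\subset\cS$) and has $w_n$-weight $\ge(n/\mu_0)^{d-\ve}\ge 2^{-(d-\ve)}n^{d-1}$ — done. (iii) If $\rho>\mu_0$: then $B(x,\mu_0)\subset A^c$, so $|\cS(x,m)\cap A^c|\ge|B(x,\mu_0)|\ge c_{\mathrm{reg}}\mu_0^d$ for all $m\ge\mu_0$, and (taking $c_*$ small) $|A^c|<\tfrac12 c_{\mathrm{reg}}(n/2)^d$, which forces $A^c$ to be a minority inside $\cS(x,m)$ for some dyadic $m\le n$. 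Let $m^\sharp\in[\mu_0,n]$ be the \emph{smallest} such scale. If $m^\sharp=\mu_0$, then $q(\mu_0)=|\cS(x,\mu_0)\cap A^c|\ge c_{\mathrm{reg}}\mu_0^d$; if $m^\sharp>\mu_0$, then at the previous scale $m^\sharp/2\ge\mu_0$ the set $A^c$ is a majority in $\cS(x,m^\sharp/2)$, whence $q(m^\sharp)=|\cS(x,m^\sharp)\cap A^c|\ge|\cS(x,m^\sharp/2)\cap A^c|>\tfrac12 c_{\mathrm{reg}}(m^\sharp/2)^d$. Either way $q(m^\sharp)\gtrsim(m^\sharp)^d$, and the remark after the core estimate closes the case. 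Taking $C_2$ to be the smallest of the constants so produced (and noting that for bounded $n$ the bound is trivial, as $\cS(x,n)$ is connected and every cut edge has weight $\ge C_{\mathrm{W}}^{-(d-\ve)}$) gives \eqref{eq:riso:weight}.

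\emph{The main obstacle} is the tension between two effects encoded in the statement: \eqref{eq:riso:weak} only controls a set that is balanced relative to $\cS(x,m)$, whereas the weights decay like $(n/m)^{d-\ve}$, so one must locate the \emph{smallest} scale at which $A$ or $A^c$ is still macroscopic there. The threshold $\mu_0\asymp n^{(1-\ve)/(d-\ve)}$ is precisely calibrated so that a single edge at that scale already carries weight $\gtrsim n^{d-1}$, the size needed to beat $|A|/n\lesssim n^{d-1}$; this is what makes the degenerate configuration — $A$ equal to $\cS(x,n)$ minus a small neighbourhood of $x$, where \eqref{eq:riso:weak} provides nothing near $x$ — tractable via the single high-weight edge near $x$.
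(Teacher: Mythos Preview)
Your argument is correct and follows essentially the same route as the paper's proof: both split into the ``balanced'' case (handled by \eqref{eq:riso:weak} at scale $n$), the case where $A$ comes close to $x$ (handled by a single high-weight boundary edge near $x$, using that $(n/n^{(1-\ve)/(d-\ve)})^{d-\ve}=n^{d-1}$), and the remaining case where one locates the critical scale at which $A$ flips from minority to majority inside $\cS(x,\cdot)$ and applies \eqref{eq:riso:weak} there. The only cosmetic differences are that the paper uses all integer scales $k$ with the threshold $|A\cap\cS(x,k)|>(1-C_1)|\cS(x,k)|$ and detects proximity via $A\cap\cS(x,\lfloor n^{\beta}\rfloor)\ne\emptyset$ (together with connectedness of $\cS(x,\lfloor n^{\beta}\rfloor)$), whereas you run over dyadic scales, use the $\tfrac12$-majority threshold, and detect proximity via $\mathrm{dist}(x,A)\le\mu_0$ along a geodesic; neither change affects the substance of the proof.
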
 
\begin{remark}
  On the Euclidean lattice, $(\bbZ^d, E_d)$, the anchored relative isoperimetric inequality \eqref{eq:riso:weight} holds for all $n \geq 1$, if $w_n(\{y,y'\}) \geq c (n / \max\{d(x,y), d(x,y')\})^{d-1}$ some $c < \infty$ and for all $\{y,y\} \in E_d$.
\end{remark}
\begin{proof}
  Set $C_1 = 2^{-(d+1)} c_{\mathrm{reg}} C_{\mathrm{reg}}^{-1} C_{\mathrm{W}}^{-d}$ and $\be = (1-\ve)/(d-\ve)$.  In view of Remark~\ref{rem:iso:mod}, it holds that for any $n \geq N_1(x) \vee N_2(x)$,
  \begin{align}\label{eq:riso:all_sets:large}
    |\partial_{\cS(x, n)} A|
    \;\geq\;
    C_1 C_{\mathrm{riso}}\, n^{-1}\, |A|
    \;\rdef\;
    C_3\, n^{-1} |A|
  \end{align}
  for all $A \subset \cS(x, n)$ with $|A| \leq (1-C_1)|\cS(x, n)|$.  Suppose that $n$ is chosen in such a way that $\lfloor n^{\be} \rfloor \geq N_1(x) \vee N_2(x)$ and let $A \subset \cS(x, n)\setminus \{x\}$ be non-empty.  Since $\cS(x, n) \subset B(x, C_{\mathrm{W}} n)$, we have that $w_n(\{y, y'\}) \geq C_{\mathrm{W}}^{-d}$ and so
  \begin{align*}
    |\partial_{\cS(x, n)} A|_{w_n} \geq C_{\mathrm{W}}^{-d} |\partial_{\cS(x, n)} A|.
  \end{align*}
  Thus, \eqref{eq:riso:weight} follows from \eqref{eq:riso:all_sets:large} for any $A \subset \cS(x, n) \setminus \{x\}$ with $|A| \leq (1 - C_1)|\cS(x, n)|$.

  It remains to consider the case $|A| > (1-C_1)|\cS(x, n)|$.  We proceed by distinguishing two different cases.  First, assume that $A \cap \cS(x, \lfloor n^{\be} \rfloor) \ne \emptyset$.  Due to the fact that $A$ does not contain the vertex $x$, there exists at least one edge $\{y, y'\} \in E$ with $y \in A \subset \cS(x, \lfloor n^{\be} \rfloor)$ and $y' \in \cS(x, \lfloor n^{\be} \rfloor) \setminus A$.  This implies
  \begin{align}
    |\partial_{\cS(x, n)} A|_{w_n}
    &\;\geq\;
    \max\big\{ w_n(e) : e \in \partial_{\cS(x, n)} A \big\}
    \nonumber\\[.5ex]
    &\overset{\eqref{eq:riso:balls}}{\;\geq\;}
    \frac{n^{d-1}}{C_{\mathrm{W}}^d}\,
    \overset{\eqref{eq:reg:large_balls}}{\;\geq\;}
    \frac{C_{\mathrm{reg}}^{-1}}{C_{\mathrm{W}}^{2d}}\,n^{-1} |B(x, C_{\mathrm{W}} n)|
    \overset{\eqref{eq:riso:balls}}{\;\geq\;}
    \frac{C_{\mathrm{reg}}^{-1}}{C_{\mathrm{W}}^{2d}}\,n^{-1} |A|.
  \end{align}
  Consider now the case that $|A| > (1-C_1) |\cS(x, n)|$ and $A \cap \cS(x, \lfloor n^{\be} \rfloor) = \emptyset$.  Set
  \begin{align}\label{eq:def:k}
    k
    \;\ldef\;
    \min\big\{
      0 \leq j \leq n \;:\; |A \cap \cS(x, j)| > (1-C_1)\, |\cS(x, j)| 
    \big\}.
  \end{align}
  Obviously, $\lfloor n^{\be} \rfloor < k \leq n$.  Since $|A \cap \cS(x, k-1)| \leq (1-C_1)\, |\cS(x, k-1)|$ by definition of $k$, we obtain by exploiting the monotonicity of the sets $\cS(x, k)$ 
  \begin{align}
    |\partial_{\cS(x, n)} A|_{w_n}
    &\;\geq\;
    \big|\partial_{\cS(x, k-1)} \big(A \cap \cS(x, k-1)\big)\big|_{w_n}
    \nonumber\\[.5ex]
    &\;\geq\;
    \frac{1}{C_{\mathrm{W}}^d}\, \bigg(\frac{n}{k-1}\bigg)^{\!\!d-\ve}
    \big|\partial_{\cS(x, k-1)} \big(A \cap \cS(x, k-1)\big)\big|
    \nonumber\\[.5ex]
    &\overset{\!\eqref{eq:riso:all_sets:large}}{\;\geq\;}
    \frac{C_3}{C_{\mathrm{W}}^d}\,
    \frac{n^{d-\ve}}{k^{d+1-\ve}}\,
    |A \cap \cS(x, k-1)|.
    \label{eq:riso:est1}
  \end{align}
  On the other hand, since
  \begin{align*}
    |\cS(x, k-1)|
    &\overset{\eqref{eq:riso:balls}}{\;\geq\;}
    |B(x, k-1)|
    \\[.5ex]
    &\overset{\eqref{eq:reg:large_balls}}{\;\geq\;}
    \frac{c_{\mathrm{reg}}\, C_{\mathrm{reg}}^{-1}}{C_{\mathrm{W}}^d}\, 2^{-d}\,
    |B(x, C_{\mathrm{W}} k)|
    \;\geq\;
    \frac{c_{\mathrm{reg}}\,C_{\mathrm{reg}}^{-1}}{C_{\mathrm{W}}^d}\, 2^{-d}\,
    |\cS(x, k)|,
  \end{align*}
  we get that $|\cS(x, k) \setminus \cS(x, k-1)| \leq (1-2C_1) |\cS(x, k)|$.  Hence,
  \begin{align}
    |A \cap \cS(x, k-1)|
    &\overset{\!\eqref{eq:def:k}\!}{\;\geq\;}
    (1-C_1)\, |\cS(x, k)| - |\cS(x, k) \setminus \cS(x, k-1)|
    \;\geq\;
    C_1 c_{\mathrm{reg}}\, k^{d}.
    \label{eq:riso:est2}
  \end{align}
  By combining \eqref{eq:riso:est1} and \eqref{eq:riso:est2}, we find that
  \begin{align}
    |\partial_{\cS(x, n)} A|_{w_n}
    \overset{\eqref{eq:reg:large_balls}}{\;\geq\;}
    \frac{C_1 C_3 c_{\mathrm{reg}}}{C_{\mathrm{W}}^{2d} C_{\mathrm{reg}}}\, n^{-1}\,
    |B(x_0, C_{\mathrm{W}} n)|
    \;\geq\;
    \frac{C_1 C_3 c_{\mathrm{reg}}}{C_{\mathrm{W}}^{2d} C_{\mathrm{reg}}}\, n^{-1}\, 
    |A|.
  \end{align}
  By setting $C_2 \ldef \min\{C_3, C_{\mathrm{reg}}^{-1} C_{\mathrm{W}}^{-2d}, C_1 C_3 c_{\mathrm{reg}} C_{\mathrm{reg}}^{-1} C_{\mathrm{W}}^{-2d}\}$, the assertion \eqref{eq:riso:weight} follows.
\end{proof}
\begin{prop}[anchored Sobolev inequality]\label{prop:Sobolev1:weighted}
  Let $x \in V$ and suppose that the assumptions of Lemma~\ref{lemma:riso:weight} are satisfied.  Then, there exists $\overbar{C}_{\mathrm{S_1}} \in (0, \infty)$ such that
  \begin{align}\label{eq:Sobolev1:weighted}
    \sum_{y \in B(x, n)}\! |u(y)|
    \;\leq\;
    \overbar{C}_{\mathrm{S_1}}\, n\mspace{-12mu}
    \sum_{\substack{y, y' \in B(x, C_{\mathrm{W}} n)\\ \{y, y'\} \in E}}\mspace{-21mu}
    w_n(\{y, y'\})\, \big|u(y) - u(y') \big|
  \end{align}
  for every function $u\! : V \to \bbR$ with $u(x) = 0$.
\end{prop}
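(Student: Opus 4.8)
The plan is to deduce \eqref{eq:Sobolev1:weighted} from the anchored relative isoperimetric inequality of Lemma~\ref{lemma:riso:weight} by means of the co-area formula, in exactly the same spirit as the proof of Proposition~\ref{prop:sobolev:S1}. First I would reduce to the case $u \geq 0$: since $\big||u(y)| - |u(y')|\big| \leq |u(y) - u(y')|$ for every $\{y,y'\} \in E$ and $|u(x)| = 0$ whenever $u(x) = 0$, it suffices to prove the inequality for nonnegative $u$ vanishing at $x$.

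For such $u$, introduce for $t \geq 0$ the super-level sets $A_t \ldef \{y \in V : u(y) > t\}$ and their truncations $A_t^{\cS} \ldef A_t \cap \cS(x, n)$. Because $u(x) = 0$, we have $x \notin A_t^{\cS}$, hence $A_t^{\cS} \subset \cS(x, n) \setminus \{x\}$, and Lemma~\ref{lemma:riso:weight} yields
\[
  |\partial_{\cS(x, n)} A_t^{\cS}|_{w_n}
  \;\geq\;
  \frac{C_2}{n}\, |A_t^{\cS}|
  \qquad \text{for all } t \geq 0 .
\]
Now I apply the co-area formula. For a fixed undirected edge $e = \{y, y'\} \in E$ with $y, y' \in \cS(x, n)$ and, say, $u(y) \geq u(y')$, one has $e \in \partial_{\cS(x, n)} A_t^{\cS}$ precisely for $t \in [u(y'), u(y))$, so that $\int_0^\infty \indicator_{e \in \partial_{\cS(x, n)} A_t^{\cS}}\, \md t = |u(y) - u(y')|$; if at least one endpoint of $e$ lies outside $\cS(x, n)$, then $e$ never contributes to $\partial_{\cS(x, n)} A_t^{\cS}$. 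Multiplying by $w_n(e)$ and summing over edges gives, using $\cS(x, n) \subset B(x, C_{\mathrm{W}} n)$ from \eqref{eq:riso:balls},
\[
  \int_0^\infty\! |\partial_{\cS(x, n)} A_t^{\cS}|_{w_n}\, \md t
  \;=\;
  \sum_{\substack{\{y, y'\} \in E \\ y, y' \in \cS(x, n)}}\mspace{-12mu}
  w_n(\{y, y'\})\, |u(y) - u(y')|
  \;\leq\;
  \sum_{\substack{y, y' \in B(x, C_{\mathrm{W}} n) \\ \{y, y'\} \in E}}\mspace{-18mu}
  w_n(\{y, y'\})\, |u(y) - u(y')| .
\]
On the other hand, by the layer-cake representation and $B(x, n) \subset \cS(x, n)$,
\[
  \int_0^\infty\! |A_t^{\cS}|\, \md t
  \;=\;
  \sum_{y \in \cS(x, n)} u(y)
  \;\geq\;
  \sum_{y \in B(x, n)} u(y) .
\]
Combining these two displays with the isoperimetric bound integrated over $t$ gives $\tfrac{C_2}{n} \sum_{y \in B(x, n)} u(y) \leq \sum w_n(\{y,y'\})\, |u(y) - u(y')|$, which is \eqref{eq:Sobolev1:weighted} with $\overbar{C}_{\mathrm{S_1}} = 1/C_2$ (a harmless factor $2$ may be absorbed depending on the edge-counting convention in the right-hand sum).

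I do not expect a genuine obstacle here; the only points that need care are the bookkeeping in the co-area identity — in particular, noting that $\partial_{\cS(x, n)}(\cdot)$ only ever involves edges with both endpoints in $\cS(x, n)$, so that no boundary contribution is lost when enlarging the sum to $B(x, C_{\mathrm{W}} n)$ — and the reduction to nonnegative $u$, which is precisely what legitimizes the plain layer-cake identity $\int_0^\infty |A_t^{\cS}|\, \md t = \sum_{y \in \cS(x,n)} u(y)$ and the use of the super-level sets avoiding the anchor point $x$.
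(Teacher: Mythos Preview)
Your proposal is correct and follows essentially the same route as the paper's proof: reduction to $u \geq 0$, co-area formula applied to the super-level sets of $u$ restricted to $\cS(x,n)$, the anchored relative isoperimetric inequality of Lemma~\ref{lemma:riso:weight}, and then the inclusions $B(x,n) \subset \cS(x,n) \subset B(x, C_{\mathrm{W}} n)$ to pass to the ball on each side. If anything, you are more explicit than the paper about why $x \notin A_t^{\cS}$ and about the inequality $\sum_{y \in \cS(x,n)} u(y) \geq \sum_{y \in B(x,n)} u(y)$, but there is no substantive difference.
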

\begin{proof}
  The proof is based on an application of the co-area formula and the anchored relative isoperimetric inequality as derived in Lemma~\ref{lemma:riso:weight}.  For some $x \in V$, let $u\!: V \to \bbR$ be a function with $u(x) = 0$.  It suffices to consider $u \geq 0$.  Define for $t \geq 0$ the super-level sets of $u$ by $A_t = \{y \in \cS(x,n) : u(y) > t\}$.  Then,
  \begin{align*}
    \sum_{\substack{y, y' \in \cS(x,n) \\ \{y,y'\} \in E}}\mspace{-15mu} 
    w_n(\{y,y'\})\, \big|u(y) - u(y')\big| 
    &\;=\; 
    \int_0^\infty\! |\partial_{\cS(x,n)} A_{t}|_{w_n}\; \mathrm{d}t
    \\
    &\overset{\!\eqref{eq:riso:weight}\!}{\;\geq\;}
    \frac{C_2}{n}\,
    \int_0^\infty\! |A_{t}|\; \mathrm{d}t
    \;=\;
    \frac{C_2}{n}\,
    \sum_{y \in B(x,n)} |u(y)|.
  \end{align*}
  Since $\cS(x,n) \subset B(x, C_{\mathrm{W}}n)$ by Assumption~\ref{ass:graph}(ii), \eqref{eq:Sobolev1:weighted} follows.
\end{proof}

\appendix
\section{Ergodic theorem}
\label{appendix:ergodic}
In this appendix we provide an extension of the Birkhoff ergodic theorem that generalises the result obtained in \cite[Theorem 3]{BD03}.  Consider a probability space $(\Om, \cF, \prob)$ and a group of measure preserving transformations $\tau_x : \Om \to \Om$, $x \in \bbZ^d$ such that $\tau_{x+y}= \tau_x\circ \tau_y$.  Further, let $B_1 \ldef \{ x \in \bbR^d : |x| \leq 1 \}$. 
\begin{theorem}
  Let $\vp \in L^{1}(\prob)$ and $\ve \in (0, d)$.  Then, for $\prob$-a.e.\ $\om$,
  \begin{align}\label{eq:ergodic:conv}
    \lim_{n \to \infty}
    \frac{1}{n^d}
    \sideset{}{'}\sum_{x \in B(0, n)} \frac{\vp(\tau_x \om)}{|x/n|^{d-\ve}} 
    \;=\;
    \bigg( \int_{B_1} |x|^{-(d-\ve)}\, \md x \bigg)\, \mean\!\big[\vp\big],
  \end{align}
  where the summation is taken over all $x \in B(0, n) \setminus \{0\}$.
\end{theorem}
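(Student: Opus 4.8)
The plan is to view the left-hand side of \eqref{eq:ergodic:conv} as an ergodic average of $\vp$ against a lattice weight that, after rescaling, approximates the $L^1(\bbR^d)$-density $g(y) \ldef |y|^{-(d-\ve)}\,\indicator_{B_1}(y)$, which is integrable near the origin precisely because $\ve > 0$; I write $I \ldef \int_{B_1}|x|^{-(d-\ve)}\,\md x = \|g\|_{L^1}$ for the target constant. First I would note that the assertion is linear in $\vp$, so it suffices to treat $\vp \ge 0$. For $\delta \in (0,1)$ I split the weight as $g = g_\delta + r_\delta$ with $g_\delta \ldef g\,\indicator_{\{|y| > \delta\}}$ and $r_\delta \ldef g - g_\delta = g\,\indicator_{\{|y| \le \delta\}}$, and set $I_\delta \ldef \|g_\delta\|_{L^1}$, so that $I_\delta \uparrow I$ as $\delta \downarrow 0$; the two corresponding pieces — the ``bulk'' sum with weight $g_\delta(x/n)$ and the ``core'' sum with weight $r_\delta(x/n) = |x/n|^{-(d-\ve)}\,\indicator_{\{|x| \le \delta n\}}$ — are then handled by different means.

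\emph{Bulk term.} For fixed $\delta$ the function $g_\delta$ is bounded, has support in a fixed bounded set, and is continuous off the Lebesgue-null set $\{|y| \in \{\delta,1\}\}$, hence Riemann integrable; so for each $\eta > 0$ I can sandwich $s^- \le g_\delta \le s^+$ by step functions that are finite nonnegative combinations of indicators of grid cubes $a + [0,h)^d$, $a \in h\bbZ^d$, with $\|s^+ - s^-\|_{L^1} < \eta$. For one such cube $Q$ the set $\{x \in \bbZ^d : x/n \in Q\}$ is a box with side lengths $nh + O(1)$, so the multidimensional pointwise ergodic theorem along boxes with diverging side lengths gives $\tfrac{1}{n^d}\sum_{x/n\in Q}\vp(\tau_x\om) \to |Q|\,\mean[\vp]$ for $\prob$-a.e.\ $\om$. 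Intersecting the full-measure sets over $h \in \{1/m : m \in \bbN\}$ and the finitely many cubes involved, and using $\vp \ge 0$ to squeeze with $s^\pm$ before letting $\eta \downarrow 0$, I obtain for $\prob$-a.e.\ $\om$
\begin{align*}
  \lim_{n\to\infty}\,\frac{1}{n^d}\sum_{x\in B(0,n)\setminus\{0\}} g_\delta(x/n)\,\vp(\tau_x\om)
  \;=\;
  I_\delta\,\mean[\vp].
\end{align*}

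\emph{Core term.} I would decompose $\{x : 0 < |x| \le \delta n\}$ into the $O(1)$ innermost sites and the dyadic shells $A_k \ldef \{x : 2^{k-1} < |x| \le 2^k\}$, $1 \le k \le \lceil\log_2(2\delta n)\rceil$. On $A_k$ one has $|x/n|^{-(d-\ve)} \le (n/2^{k-1})^{d-\ve}$ and, with $C_k$ the smallest $\ell^\infty$-cube containing $A_k$ (of side $O(2^k)$, so $|C_k| = O(2^{kd})$), $\sum_{x\in A_k}\vp(\tau_x\om) \le |C_k|\,\overline{\vp}_k(\om)$ where $\overline{\vp}_k(\om) \ldef |C_k|^{-1}\sum_{x\in C_k}\vp(\tau_x\om)$. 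Since $\overline{\vp}_k(\om) \to \mean[\vp]$ by Birkhoff's theorem along cubes, $\Lambda(\om) \ldef \sup_{k\ge1}\overline{\vp}_k(\om) < \infty$ for $\prob$-a.e.\ $\om$ — here only boundedness of a convergent sequence is used, so no maximal inequality is needed. Summing the geometric series $\sum_k 2^{k\ve}$, whose largest term is $O((\delta n)^\ve)$, yields for $\prob$-a.e.\ $\om$
\begin{align*}
  \limsup_{n\to\infty}\,\frac{1}{n^d}\sum_{0<|x|\le\delta n}\!\frac{\vp(\tau_x\om)}{|x/n|^{d-\ve}}
  \;\le\;
  c(d,\ve)\,\Lambda(\om)\,\delta^{\ve},
\end{align*}
and this sum is nonnegative.

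\emph{Conclusion.} Since $g = g_\delta + r_\delta$ with $r_\delta \ge 0$, combining the last two displays gives, for every $\delta \in (0,1)$ and $\prob$-a.e.\ $\om$,
\begin{align*}
  I_\delta\,\mean[\vp]
  \;\le\;
  \liminf_{n\to\infty}\,\frac{1}{n^d}\sum_{x\in B(0,n)\setminus\{0\}}\frac{\vp(\tau_x\om)}{|x/n|^{d-\ve}}
  \;\le\;
  \limsup_{n\to\infty}(\cdots)
  \;\le\;
  I_\delta\,\mean[\vp]+c(d,\ve)\,\Lambda(\om)\,\delta^{\ve},
\end{align*}
and letting $\delta \downarrow 0$ (so $I_\delta \uparrow I < \infty$ and $\delta^\ve \to 0$) proves \eqref{eq:ergodic:conv}. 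I expect the core estimate to be the only genuinely delicate step: it amounts to controlling the lattice sites near the origin, where the weight blows up, and it works purely because $|x|^{-(d-\ve)}$ is integrable at $0$ (the hypothesis $\ve > 0$) and because the ergodic cube-averages $\overline{\vp}_k(\om)$ are bounded. If the $\bbZ^d$-action is not assumed ergodic, every $\mean[\vp]$ above should be read as the conditional expectation given the $\si$-algebra of shift-invariant events.
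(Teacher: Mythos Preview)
Your proof is correct and follows essentially the same strategy as the paper's: truncate the singular weight, handle the bounded piece by a pointwise ergodic theorem for Riemann-integrable weights, and control the remainder near the singularity via the almost-sure finiteness of the supremum of ergodic ball (or cube) averages. The only differences are cosmetic --- the paper truncates at height $k$ rather than at radius $\delta$, cites \cite[Theorem~3]{BD03} for the bounded-weight step instead of sketching the step-function approximation, and uses Abel summation in place of your dyadic-shell decomposition for the remainder.
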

\begin{proof}
  To start with, notice that the ergodic theorem, see \cite[Theorem 3]{BD03}, implies that for $\prob$-a.e.\ $\om$
  \begin{align}\label{eq:lim:1}
    &\lim_{k \to \infty} \lim_{n \to \infty}\,
    \frac{1}{n^d}
    \sideset{}{'}\sum_{x \in B(0, n)}\!
    \Big( k \wedge |x/n|^{-(d-\ve)} \Big)\, \vp(\tau_x \om)
    \nonumber\\[.5ex]
    &\mspace{36mu}=\;
    \lim_{k \to \infty}
    \bigg( \int_{B_1} k \wedge |x|^{-(d-\ve)}\, \md x \bigg)\,
    \mean\!\big[\vp\big]
    \;=\;
    \bigg( \int_{B_1} |x|^{-(d-\ve)}\, \md x \bigg)\, \mean\!\big[\vp\big].
  \end{align}
  On the other hand, by means of Abel's summation formula, we have that
  \begin{align*}
    \bigg|
      \frac{1}{n^{\ve}}\,
      \sideset{}{'}\sum_{x \in B(0, n)}\! \frac{\vp(\tau_x \om)}{|x|^{d-\ve}}
    \bigg|
    &\;=\;
    \bigg|
      \frac{1}{n^{\ve}}\,
      \sum_{j=1}^{n}
      \frac{1}{j^{d-\ve}} \sum_{|x| = j} \vp(\tau_x \om)
    \bigg|
    \\[.5ex]
    &\;\leq\;
    \bigg|
      \frac{1}{n^{d}}
      \sideset{}{'}\sum_{x \in B(0,n)} \mspace{-6mu}\vp(\tau_x \om)
    \bigg|
    \,+\,
    \frac{d-\ve}{n^{\ve}}\,
    \sum_{j=1}^{n-1} \frac{1}{j^{1-\ve}}\,
    \bigg|
      \frac{1}{j^d}\!
      \sideset{}{'}\sum_{x \in B(0, j)} \mspace{-6mu}\vp(\tau_x \om)
    \bigg|,
  \end{align*}
  where we used that $j^{-(d-\ve)} - (j+1)^{-(d-\ve)} \leq (d-\ve) j^{-(d+1-\ve)}$.  From this estimate we deduce that for any $\vp \in L^{1}(\prob)$ and $\prob$-a.e.\ $\om$
  \begin{align}\label{eq:maxineq:weighted}
    \sup_{n \geq 1}\;
    \Bigg|\,
      \frac{1}{n^{\ve}}\mspace{-3mu}
      \sideset{}{'}\sum_{x \in B(0, n)} \mspace{-6mu}
      \frac{\vp(\tau_x \om)}{|x|^{d-\ve}}\,
    \Bigg|
    \;\leq\;
    \frac{C d}{\ve}\,
    \sup_{n \geq 0}\,
    \frac{1}{|B(0, n)|} 
    \sum_{x \in B(0, n)} \mspace{-6mu}\big|\vp(\tau_x \om)\big|
    \;<\;
    \infty,
  \end{align}
  where $C \ldef \sup_{n \geq 1} |B(0, n)| / n^d < \infty$.  On the other hand, for any $k \geq 1$
  \begin{align}\label{eq:sum:diff}
    &\frac{1}{n^d}\,
    \Bigg|
      \,\sideset{}{'}\sum_{x \in B(0,n)}\mspace{-9mu}
      \bigg(
        \frac{1}{|x/n|^{d-\ve}} - k \wedge \frac{1}{|x/n|^{d-\ve}}
      \bigg)\, \vp(\tau_x \om)
    \Bigg|
    \;\leq\;
    \frac{C}{k^{\ve/d}}\,
    \sup_{n \geq 1}\;
    \Bigg|\,
      \frac{1}{n^{\ve}}\mspace{-3mu}
      \sideset{}{'}\sum_{x \in B(0,n)} \mspace{-6mu}
      \frac{\vp(\tau_x \om)}{|x|^{d-\ve}}\,
    \Bigg|.
  \end{align}
  Since the last factor on the right-hand side of \eqref{eq:sum:diff} is finite due to \eqref{eq:maxineq:weighted}, we conclude that $\prob$-a.s.\
  \begin{align}\label{eq:lim:2}
    \lim_{k \to \infty}
    \frac{1}{n^d}\,
    \Bigg|
      \,\sideset{}{'}\sum_{x \in B(0, n)}
      \mspace{-6mu}|x/n|^{-(d-\ve)}\, \vp(\tau_x \om)
      \,-\!
      \sideset{}{'} \sum_{x \in B(0, n)}
      \mspace{-6mu}\Big( k \wedge |x/n|^{-(d-\ve)} \Big)\, \vp(\tau_x \om)\,
    \Bigg|
    \;=\;
    0
  \end{align}
  uniformly in $n$.  The assertion follows by combining \eqref{eq:lim:1} and \eqref{eq:lim:2}.
\end{proof}
\subsubsection*{Acknowledgement} We thank Martin Barlow for useful discussions and valuable comments and in particular for suggesting the interpolation argument that is used in Lemma~\ref{lemma:iso:all_sets}.   T.A.N. gratefully acknowledges financial support of the DFG Research Training Group (RTG 1845) ''Stochastic Analysis with Applications in Biology, Finance and Physics'' and the Berlin Mathematical School (BMS).

\bibliographystyle{plain}
\bibliography{literature}

\end{document}